





\documentclass[sn-mathphys]{sn-jnl}

\usepackage{mathrsfs,yhmath}

\jyear{2023}%

\usepackage{multirow} 
\usepackage{tabularx}
\usepackage{adjustbox}
\usepackage{url}

\usepackage{natbib}
\usepackage{hyperref}
\theoremstyle{thmstyleone}%
\newtheorem{theorem}{Theorem}[section]

\newtheorem{proposition}[theorem]{Proposition}%

\theoremstyle{thmstyletwo}%
\newtheorem{example}{Example}%
\newtheorem{remark}{Remark}%

\theoremstyle{thmstylethree}%
\newtheorem{definition}{Definition}%

\raggedbottom

\begin{document}

\title[Stable Similarity Comparison]{Stable Similarity Comparison of Persistent Homology Groups}


\author[1]{\fnm{Jiaxing} \sur{He}}\email{547337872@qq.com}

\author*[1]{\fnm{Bingzhe} \sur{Hou}}\email{houbz@jlu.edu.cn}

\author[1,2]{\fnm{Tieru} \sur{Wu}}\email{wutr@jlu.edu.cn}

\author[1]{\fnm{Yang} \sur{Cao}}\email{caoyang@jlu.edu.cn}

\affil*[1]{\orgdiv{School of Mathematics}, \orgname{Jilin University}, \orgaddress{\street{Qianjin Street}, \city{Changchun}, \postcode{130012}, \state{Jilin}, \country{P. R. China}}}

\affil[2]{\orgdiv{School of Artificial Intelligence}, \orgname{Jilin University}, \orgaddress{\street{Qianjin Street}, \city{Changchun}, \postcode{130012}, \state{Jilin}, \country{P. R. China}}}


\abstract{Classification in the sense of similarity is an important issue. In this paper, we study similarity classification in Topological Data Analysis. We define a pseudometric $d_{S}^{(p)}$ to measure the distance between barcodes generated by persistent homology groups of topological spaces, and we provide that our pseudometric $d_{S}^{(2)}$ is a similarity invariant. Thereby, we establish a connection between Operator Theory and Topological Data Analysis. We give the calculation formula of the pseudometric $d_{S}^{(2)}$ $(d_{S}^{(1)})$ by arranging all eigenvalues of matrices determined by barcodes in descending order to get the infimum over all matchings. Since conformal linear transformation is one representative type of similarity transformations, we construct comparative experiments on both synthetic datasets and waves from an online platform to demonstrate that our pseudometric $d_{S}^{(2)}$ $(d_{S}^{(1)})$ is stable under conformal linear transformations,  whereas the bottleneck and Wasserstein distances are not. In particular, our pseudometric on waves is only related to the waveform but is independent on the frequency and amplitude. Furthermore, the computation time for $d_{S}^{(2)}$ $(d_{S}^{(1)})$ is significantly less than the computation time for bottleneck distance and is comparable to the computation time for accelerated Wasserstein distance between barcodes.}

\keywords{Persistent homology, pseudometric, similarity, barcodes, filtration}


\pacs[MSC Classification]{Primary 55N31, 68T09; Secondary 15A18, 47B15}

\maketitle

\section{Introduction}
Similarity classification is important in both theoretical research and real world. Topological Data Analysis (TDA) is a ubiquitous tool for classification (See the surveys \cite{Car-book-2009, Zo-book-2005}). Some pseudometrics  have been induced to measure distance between barcodes or persistence modules (see the surveys \cite{Qudot-2017, PRSZ-2020}). However, these pseudometrics can only measure the distances between two barcodes or persistence modules in the sense of congruence. In this paper, we introduce a pseudometric $d_{S}^{(p)}$ to measure the distance between two barcodes of persistence homology groups in the sense of similarity. 

In reality, we often focus on classifying similar objects. For instance, we can easily distinguish the timbre of an instrument, regardless of the pitch and volume of the instrument being played. Meanwhile, even when playing the same piece, we can easily distinguish the timbres of different instruments. The reason for this is that every instrument has its "essential waveform", which means timbres should be determined by the essential waveforms but independent on the frequency and amplitude. We can regard the waveforms of the same instruments as a equivalence class under similarity equivalence, and the essential waveform is the representative element of the equivalence class. More precisely, the essential waveform of an instrument is a similarity invariant.

To present the conclusion more intuitively, we choose two specific instruments, a piano and a tuning fork, and study the essential waveforms in their slices of sound waves.

Firstly, we take the slices from the sound wave of the same instrument. In Figure \ref{fig:piano}, the sound wave \cite{piano} produced by a piano shows a noticeable decrease in amplitude over time, along with changes in frequency.  By selecting slices of the waveform at different time intervals with different length, as shown in Figure \ref{fig:piano_slice}, traditional pseudometrics (such as bottleneck distance and Wasserstein distance) would treat the slices as different instruments which is unreasonable. Our pseudometric, however, can treat these slices as the same instrument.

\begin{figure}[h]
	\centering
	\includegraphics[width=0.95\textwidth]{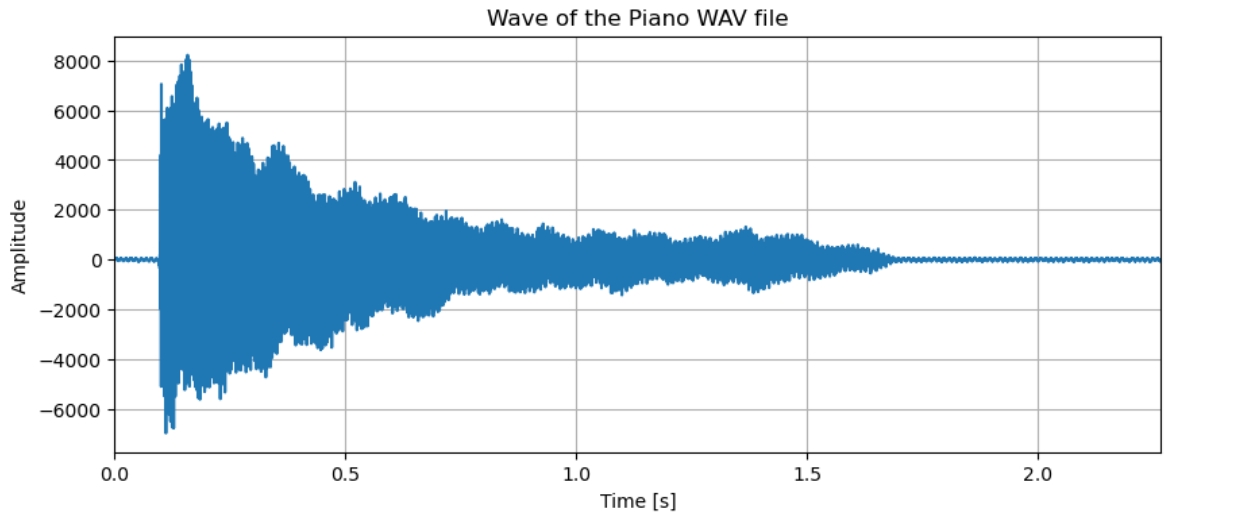}
	\caption{A wave from piano.}
	\label{fig:piano}
\end{figure}

\begin{figure}[h]
	\centering
	\includegraphics[width=0.95\textwidth]{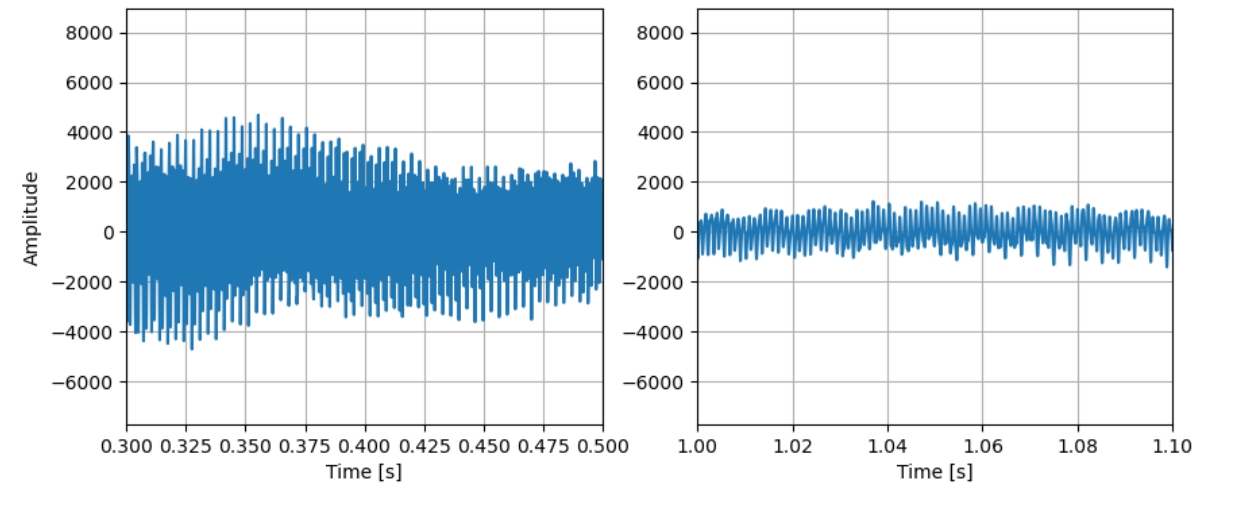}
	\caption{Two segments of different durations extracted from the wave in Figure \ref{fig:piano}.
	}
	\label{fig:piano_slice}
\end{figure}

Secondly, we take the slices of different instruments. The timbres of a tuning fork are different from those of a piano, as the image of the waveform of tuning fork \cite{tuning} shown in Figure \ref{fig:tuning}. Our pseudometric can also distinguish the timbres of the piano and the tuning fork. This is because our pseudometric is only related to the essential waveforms but  independent on the frequencies and amplitudes. More precisely, our pseudometric is stable under similarity.

\begin{figure}[h]
	\centering
	\includegraphics[width=0.95\textwidth]{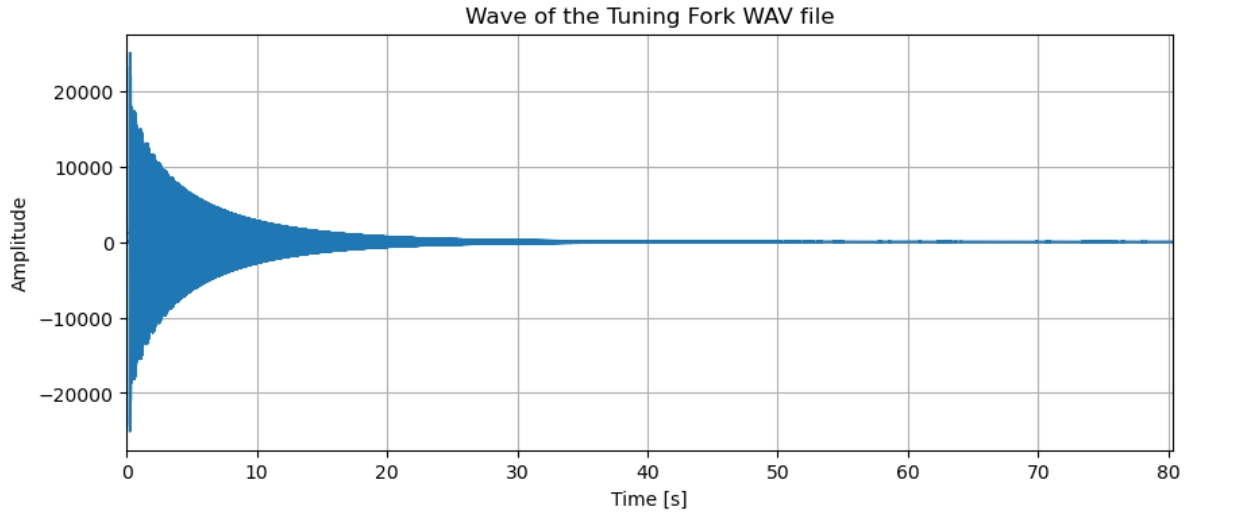}
	\caption{A wave from tuning fork.}
	\label{fig:tuning}
\end{figure}

Similarity classification is an improtant issue in  Topological Data Analysis (TDA). Now, let us overview TDA. Topological Data Analysis (TDA), as an emerging field, has made significant contributions to both theory and applications. Persistent homology as a useful tool in TDA provides insights into the topological features that persist across different scales. A version of homology theory was invented by H. Edelsbrunner, D. Letscher and A. Zomorodian \cite{Edel-Lets-Zo-2000} at the beginning of the millennium.  G. Carlsson and A. Zomorodian  \cite{Zo-Gu-2005} first introduced persistence module for dealing with persistence homology in 2005, and they showed that no similar complete discrete invariant exists for multidimensional persistence in 2007 \cite{Car-2009}. Since TDA encodes topological structures hidden in the data, it can lead to a rich of high efficient algorithms \cite{David-2006} and software \cite{RIVET-2020, Dion-2012}, and a board range of applications including shape description \cite{Co-Zo-Car-2004}, differentiable
topology layers \cite{Gab-Nel-Dwa-2020} as well as holes detecting \cite{De-Gh-Mu-2005}. For recent surveys, we refer to \cite{Edel-Moro-2017, Car-2020, Pun-Lee-Xia-2022}.  

Persistent homology is generated by a filtration, and the persistent homology groups produced by different filtrations show various topological features. In the 1-parameter setting, there are three well-known filtrations on point clouds, the Vietoris-Rips filtration, the \v{C}ech filtration and the alpha filtration. Filtrations on point clouds can be used for classifying dynamical systems \cite{Adams-Emer-Kir-2017}, biomolecule \cite{Meng-Anand-Lu-2020} and proteins \cite{Kov-Bub-Nik-2016}. Filtrations on sublevel sets have been successful applied in the fields of graphics \cite{Pou-Skr-Ovs-2018} and medical imaging \cite{Ahm-Nuw-Tor-2023, Oya-Hir-Oba-2019}.

Barcodes and persistence modules are representations of persistent homology groups. There has been a lot of theoretical research, such like  extension of persistence module concept, spectral sequences and interval decompositions of persistence modules. To see more details, we refer to \cite{Qudot-2017}.

Pseudometrics are usually used to classify barcodes and persistence modules, many scholars have proposed various pseudometrics in recent years. Natural pseudo-distance, bottleneck distance $d_{b}$ and interleaving distance $d_{I}$ are three important pseudometrics, their relations have been considered and Isometry Theorem have been provided which can be seen in \cite{Co-Edel-Har-2007, d-Fro-Lan-2010, Bau-Les-2014, Les-2015}.  P. Frosini introduced a pseudometric $d_{T}$ as a stable method to compare persistent homology groups with torsion in \cite{Fro-2013}. G. Carlsson and B. Filippenko in \cite{Carl-Fil-2020} investigate the persistent homology $PH_{*}(X\times Y)$ of the Cartesian product $X\times Y$ equipped with the sum metric $d_{X}+d_{Y}$ when given finite metric spaces $(X,d_{X})$ and $(Y,d_{Y})$. In \cite{Fro-Lan-Me-2019}, P. Frosini, C. Landi and F. M{\'e}moli introduced the persistent homotopy type distance $d_{HT}$ 
to compare two real valued functions defined on possibly different homotopy equivalent topological spaces.

In the real world, similar objects always need to be classified into one class.  More precisely, the objects under similarity transformations should belong to the same class. However, the pseudometrics such as $d_{b}$, $d_{I}$ and $d_{T}$ we mentioned  in previous cannot classify objects in the sense of similarity. In this paper, we want to introduce a pseudometric which is different from the existing pseudometrics, to classify the objects effectively in the sense of similarity. That would be a more practical and effective approach.

\subsection{Contributions}
We devote a major portion of this paper to the following contributions:

$\bullet$ We introduce a similarity pseudometric $d_{S}^{(p)}$ that can compare barcodes generated by persistent homology groups on topological spaces. We provide that our pseudometric $d_{S}^{(2)}$ is a similarity invariant.

$\bullet$  We give a calculation formula $d_{S}^{(2)}$ $(d_{S}^{(1)})$. The pseudometric $d_{S}^{(2)}$ $(d_{S}^{(1)})$ is the infimum distance of the eigenvalues of the matchings determined by barcodes over all matchings. In fact, the matching to get the distance is arranging all eigenvalues in descending order of each matrices. 

$\bullet$ To verify the similar objects are close under our pseudometric $d_{S}^{(2)}$ $(d_{S}^{(1)})$, we conduct comparable experiments on  synthetic datasets, and then the experimental results demonstrate that our pseudometric is stable under conformal linear transformations which is one representative type of similarity transformations. 

$\bullet$ We also conduct comparable experiments on  waves of a tuning fork and a piano, and  then the experimental results demonstrate that our pseudometric $d_{S}^{(2)}$ $(d_{S}^{(1)})$ is only related to the essential waveform and is independent on frequency and amplitude.

$\bullet$ Our  computation time  of $d_{S}^{(2)}$ $(d_{S}^{(1)})$ is significantly less than that of bottleneck distance and is comparable to that of the accelerated Wasserstein distance.

Our code for the experiments is publicly available at \url{https://github.com/HeJiaxing-hjx/Stable_Similarity_Comparison_of_Persistent_Homology_Groups}.

\section{Preliminaries}

In this section, we review some background knowledge and introduce some preliminary definitions. 

Let us introduce some definitions of filtration at first. A sequence of nested topological spaces, 
\begin{align}\label{nest}
	X^{1}\subseteq X^{2}\subseteq\cdots\subseteq X^{n}\subseteq\cdots,
\end{align}
is called a filtration.

Let $\varphi$ be a real-valued continuous function on a topological space, $X^{t}=\varphi^{-1}((-\infty,t))$ be a sublevel set, where $t\in\mathbb{R}$, the sequence $\{X^{t}\}_{t\in \mathbb{R}}$ is called a $sublevel$ $set$ $filtration$.

Let $R^{i}(X,d_{X})$  be a simplicial complex on the vertex set $X$ that satisfies the following condition, 
$$
\sigma \in R^{i}(X,d_{X})\Leftrightarrow d_{X}(x,y)\leq \text{i, for all } x,y\in \sigma, i\in \mathbb{R}.
$$
Notice that $R^{i}(X,d_{X})$ is empty for $i<0$ and only consists of the vertex set $X$ for $i=0$. There is a natural inclusion $R^{i}(X,d_{X})\subseteq R^{j}(X,d_{X})$ whenever $i\leqslant j$. Thus, the simplical complexes $R^{i}(X,d_{X})$ together with these inclusion maps define a simplicial filtration $\mathcal{R}(X,d_{X})$ with vertex set $X$, called the $(Vietoris$-$)Rips$ $complex$ of $(X,d_{X})$.

Given a filtration, we can compute its persistent homology group. Now, we will introduce the definition of persistent homology. 

Let $\{X^{l}\}$ be a  sequence of nested topological spaces, where $X^{l}$ is the $l\ th$ topological space. Elements of $C_{n}(X^{l})$ are called $n$-chains. We perform a chain complex $C_{*}(X^{l})$ as follows,
$$
\cdots\rightarrow C_{p+1}(X^{l})\stackrel{\partial_{p+1}^{l}}\rightarrow C_{p}(X^{l})\stackrel{\partial_{p}^{l}}\rightarrow\cdots,
$$
where each $\partial_{p}^{l}$ is a boundary map. To see more details about homology theory, we refer to \cite{Hatcher}.

\begin{definition}
	Let $\{X^{l}\}$ be a filtration. The $k$-th persistent $p$-th homology group of $\{X^{l}\}$ is 
	$$
	H^{l,k}_{p} = Z_{p}^{l}/(B_{p}^{l+k}\cap Z_{p}^{l}),
	$$
	where $Z_{p}^{l}=Z_{p}(X^{l})=\text{ker}\partial_{p}^{l}$, $B_{p}^{l}=B_{p}(X^{l})=\text{im}\partial_{p+1}^{l}$. The $k$-th persistent $p$-th Betti number $\beta^{l,k}_{p}$ is the rank of $H^{l,k}_{p}$. 
\end{definition}
For more details about persistent homology, we refer to \cite{Qudot-2017}.

Given a filtration, persistent homology provides a small description in terms of a multiset of intervals which are called barcodes.  Barcodes could mark the parameter values for births and deaths of homological features. In \cite{Car-2009}, a $multiset$ is a set in which an element may appear multiple times, such as $\{a, a, b, c\}$. 

In \cite{PRSZ-2020}, a $barcode$ $\mathcal{B}$ $of$ $finite$ $type$  is a finite multiset of intervals. i.e., it is a finite collection $\{(I_{i},m_{i})\}$ of intervals $I_{i}$ with given multiplicities $m_{i}\in \mathbb{N}$. The intervals $I_{i}$ are all either finite of the form $(a,b]$ or infinite of the form $(a,+\infty)$. Based on the concept of finite type, we provide an equivalent form of barcode and then present a stronger version named the barcode of finite bounded type.

\begin{definition} 
	A $barcode$ $\mathcal{B}$ $of$ $finite$ $bounded$ $type$  is a finite multiset of intervals. i.e., it is a finite collection of intervals $\{I_{i}\}$ and an interval could appear multiple times. The intervals $I_{i}$ are all  finite bounded of the form $(a,b]$. Each element $I_{i}$ in a barcode will be sometimes is called a $bar$.
\end{definition}

Given a sequence like (\ref{nest}), we want not only to compute the topological structure of each space $X^{i}$ separately, but also to understand how topological features persist across the family. The right tool to do this is homology over field, which turns (\ref{nest}) into a sequence of vector spaces (the homology groups $H_{*}(X^{i})$) connected by linear maps (induced by the inclusions $X^{i}\hookrightarrow X^{i+1}$),
\begin{align}\label{homology}
	H_{*}(X^{1})\rightarrow H_{*}(X^{2})\rightarrow \cdots \rightarrow H_{*}(X^{n}).
\end{align}
Such a sequence is called a $persistence$ $module$. Thus we move from the topological category to the algebraic category, where our initial problem becomes to find bases for the vector spaces $H_{*}(X^{i})$ that are 'comparable' with the maps in (\ref{homology}). Roughly speaking, being compatible means that for any indices $i$, $j$ with $1\leq i \leq j\leq n$, the composition
$$
H_{*}(X^{i})\rightarrow H_{*}(X^{i+1})\rightarrow \cdots \rightarrow H_{*}(X^{j-1}) \rightarrow H_{*}(X^{j})
$$
has a (rectangular) diagonal matrix in the bases of $H_{*}(X^{i})$ and $H_{*}(X^{j})$. Then, every basis element can be tracked across the sequence (\ref{homology}), its $birth$ $time$ $b$ and $dead$ $time$ $d$ defined respectively as the first and last indices at which it is a part of the current basis. At the topological level, this basis element corresponds to some features (connected components, hole, void, etc.) appearing in $X^{b}$ and disappearing in  $X^{d+1}$. Its lifespan is encoded as an interval $(b,d]$ in the persistence barcode.

In the theory of persistent homology, the most widely used distance on barcodes is the bottleneck distance $d_{b}$. Another important pseudometric is the Wasserstein distance $d_{\mathcal{W}}$. To the best of our knowledge, this version of the Wasserstein
distance first appeared in the TDA literature for arbitrary $p\in [1,\infty]$ in \cite{Ro-Tur-2017}. The following definitions of Wasserstein distance and bottleneck distance on barcodes are given in \cite{Bje-Les-2021}.

A $matching$ between sets $\mathcal{S}$ and $ \mathcal{T}$ is a bijection $\sigma:\mathcal{S}'\rightarrow \mathcal{T}'$, where $\mathcal{S}'\subset \mathcal{S}$, and $\mathcal{T}'\subset \mathcal{T}$. Formally, we may regard $\sigma$ as a subset of $\mathcal{S}\times \mathcal{T}$, where
$(s,t)\in \sigma$ if and only if $\sigma(s)=t$. In fact, $\sigma$ is a matching between barcodes. The definition of a matching extends readily to multisets.

Write each interval in a barcode of finite bounded type
in the form $(a_{1}, a_{2}]$, where $a_{1}< a_{2}\in\mathbb{R}$. We  identify this interval
with the point $a = (a_{1}, a_{2}) \in \mathbb{R}\times \mathbb{R}$, and denote $$m(a) = (\frac{a_{1}+a_{2}}{2},\frac{a_{1}+a_{2}}{2})\in \mathbb{R}^{2}.$$
\begin{definition}\cite{Bje-Les-2021}
	Let $\mathcal{B}$ and $\mathcal{C}$ be  barcodes of finite type. Given $t\in [1,\infty)$ and a matching $\sigma :\mathcal{B}\rightarrow \mathcal{C}$, we define
	\begin{align*}
		cost(\sigma, p) = \sqrt[p]{\sum\limits_{(s,t)\in \sigma} \Vert s-t \Vert_{p}^{p} +\sum\limits_{a\in \mathcal{B}\sqcup \mathcal{C} \text{ unmatched~by} ~\sigma}\Vert a-m(a) \Vert_{p}^{p}}.
	\end{align*}
	Similarly,
	\begin{align*}
		cost(\sigma, \infty) = \text{max}\left\{\underset{(s,t)\in \sigma}{\text{max}}\Vert s-t \Vert_{\infty},\underset{a\in \mathcal{B}\sqcup \mathcal{C} \text{ unmatched~by} ~\sigma}{\text{max}}\Vert a-m(a) \Vert_{\infty}\right\}.
	\end{align*}
	For $p\in [1,\infty]$, the $p$-$Wasserstein$ $distance$ $between$ $\mathcal{B}$ $and$ $\mathcal{C}$, denoted $d_{\mathcal{W}}^{(p)}(\mathcal{B},\mathcal{C})$, is defined by 
	\begin{align*}
		d_{\mathcal{W}}^{(p)}(\mathcal{B},\mathcal{C})=\underset{\sigma}{\text{min}}~cost(\sigma, p).
	\end{align*}
	where $\sigma$ is taken all over the matchings between the two barcodes $\mathcal{B}$ and $\mathcal{C}$. It is easily checked that,
	\begin{align*}
		d_{\mathcal{W}}^{\infty}(\mathcal{B},\mathcal{C}) = \lim_{p\rightarrow\infty}d_{\mathcal{W}}^{(p)}(\mathcal{B},\mathcal{C}).
	\end{align*}
\end{definition}
The $bottleneck$ $distance$ between two multisets $\mathcal{B}$, $\mathcal{C}$ is the smallest possible bottleneck cost achieved by partial matchings between them,
\begin{align*}
	d_{b}(\mathcal{B},\mathcal{C}) 
	= & d_{\mathcal{W}}^{\infty}(\mathcal{B},\mathcal{C}) \\
	= & \underset{\sigma}{\text{min}}~\text{max}\{\underset{(s,t)\in \sigma}{\text{max}} {\Vert s-t \Vert}_{\infty}, \underset{a\in \mathcal{B}\sqcup \mathcal{C} \text{ unmatched~by} ~\sigma}{\text{max}}\Vert a-m(a) \Vert_{\infty}\}.
\end{align*}
where $\sigma$ is taken all over the matchings between the two barcodes $\mathcal{B}$ and $\mathcal{C}$.

Next, we will introduce the definitions of the functional analysis related to this article.

An operator $T$ on a Hilbert space $H$ to itself satisfying $T^{*}=T$ is said to be $Hermitian$  or $selfadjoint$. $\mathscr{L}(H)$ is the set of all bounded linear operators on $H$.  $T\in \mathscr{B}(H)$ is said to be normal if $T$ and $T^{*}$ are commutative, i.e.,
$$
T^{*}T = TT^{*}.
$$
Hermitian operators and unitary operators on a Hilbert space into itself are special cases of $normal$ $operators$.

Next, we will introduce several relevant definitions of norms.

\begin{definition}
	For a matrix $A\in \mathbb{R}^{m\times n}$, the Frobenius norm is defined by
	$$
	\Vert A\Vert_{F}=\sqrt{\sum\limits_{i=1}^{m}\sum\limits_{j=1}^{n}|a_{ij}|^{2}}=\sqrt{Tr(A^{*}A)},
	$$
	where $A^{*}$ is the conjugate transpose of $A$ and $Tr$ denotes the trace. We call that	an infinite dimensional matrix $A$ is a  Hilbert-Schmidt operator if $Tr(A^{*}A)<\infty$.
\end{definition}

\begin{definition}
	For a linear operator $T:X\rightarrow Y$, where $X$ and $Y$ are normed spaces, the operator norm is defined by
	$$
	\Vert T \Vert = \underset{\Vert x \Vert_{X}=1}{\text{sup}}\Vert Tx \Vert_{Y}.
	$$ 
	
\end{definition}

To consider similarity invariant, we need the following definition of  similarity orbit.

\begin{definition}
	Let $H $ be a separable Hilbert space of finite dimension or infinite dimension.  Denote by $\mathscr{L}(H)$ the set of all bounded linear operators on $H$. For $A,B\in \mathscr{L}(H)$, we call that $A$ is similar to $B$ if there exists an invertible operator $T\in \mathscr{L}(H)$ such that $TA=BT$, denoted by $A\sim B$. Denote by $\mathscr{S}(T)$ the set of all bounded linear operators on $H$ which are similar to $T$. We call $\mathscr{S}(T)$ the similarity orbit of $T$. Let $$d_{F}(\mathscr{S}(A),\mathscr{S}(B))=\underset{A'\in \mathscr{S}(A),B'\in \mathscr{S}(B)}{\text{inf}}\Vert A'-B' \Vert_{F},$$ where $\Vert\cdot\Vert_{F}$ is the Frobenius norm (if $A'-B'$ does not belong to the Hilbert-Schmidt operators class, $\Vert A'-B' \Vert_{F}$ is infinite). We call $d_{F}(\mathscr{S}(A),\mathscr{S}(B))$ the Frobenius distance between the similarity orbits of $A$ and  $B$.

\end{definition}

In addition, the Hoffman-Wielandt Theorem will be useful to prove the stability of our pseudometric.

\begin{theorem}\label{Hoff}
	Let $A$ and $B$ be two $n\times n$ normal matrices. Suppose their eigenvalues are $\lambda_{1}$, $\lambda_{2}$, $ \ldots$, $\lambda_{n}$ and $\mu_{1}$, $\mu_{2}$, $  \ldots$, $\mu_{n}$, respectively. Then there exists a permutation of $\{1,2, \ldots,n\}$ such that,
	\begin{align*}
		\sqrt{\sum\limits_{i=1}^{n}\vert\mu_{\pi(i)}-\lambda_{i}\vert^{2}}\leq \Vert B-A\Vert_{F}.
	\end{align*}
	
\end{theorem}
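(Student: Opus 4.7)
The plan is to exploit the fact that normal matrices are unitarily diagonalizable, reduce the problem to analyzing a doubly stochastic matrix built from a unitary, and then invoke the Birkhoff--von Neumann theorem.

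First, I would write $A = U D_A U^{*}$ and $B = V D_B V^{*}$ with $D_A = \mathrm{diag}(\lambda_1,\dots,\lambda_n)$ and $D_B = \mathrm{diag}(\mu_1,\dots,\mu_n)$ and $U,V$ unitary. Using unitary invariance of the Frobenius norm twice (multiplying by $U^{*}$ on the left and $V$ on the right), I would obtain
\begin{equation*}
\|B-A\|_F^{2} \;=\; \|WD_B - D_A W\|_F^{2},\qquad W := U^{*}V,
\end{equation*}
where $W$ is unitary. A direct entrywise expansion gives $(WD_B - D_A W)_{ij} = W_{ij}(\mu_j - \lambda_i)$, hence
\begin{equation*}
\|B-A\|_F^{2} \;=\; \sum_{i,j=1}^{n} |W_{ij}|^{2}\,|\mu_j - \lambda_i|^{2}.
\end{equation*}

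Next, I would set $P_{ij} := |W_{ij}|^{2}$. Because $W$ is unitary, $P$ is doubly stochastic: its rows and columns sum to $1$. By the Birkhoff--von Neumann theorem, $P$ lies in the convex hull of permutation matrices, so $P = \sum_{k} c_k P_{\pi_k}$ with $c_k \geq 0$, $\sum_k c_k = 1$, and $P_{\pi_k}$ the permutation matrix of $\pi_k$. Substituting yields
\begin{equation*}
\|B-A\|_F^{2} \;=\; \sum_{k} c_k \sum_{i=1}^{n} |\mu_{\pi_k(i)} - \lambda_i|^{2}.
\end{equation*}
Since a convex combination is at least the minimum of its summands, choosing $\pi = \pi_{k^{*}}$ that achieves $\min_k \sum_i |\mu_{\pi_k(i)} - \lambda_i|^{2}$ gives $\sum_i |\mu_{\pi(i)} - \lambda_i|^{2} \leq \|B-A\|_F^{2}$, which is the claim.

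The main obstacle I expect is not any single step but recognizing that the quadratic form $\sum_{i,j} P_{ij}|\mu_j - \lambda_i|^{2}$ is linear in $P$ over the Birkhoff polytope; once that observation is in hand, the averaging/minimization argument produces the desired permutation for free. A minor subtlety is verifying that $|W_{ij}|^{2}$ is genuinely doubly stochastic, which uses both $WW^{*}=I$ and $W^{*}W=I$ rather than just one of them, and this is where the normality of both $A$ and $B$ (to guarantee unitary diagonalizability) is actually used.
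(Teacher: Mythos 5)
Your proof is correct, and it is the standard classical argument for the Hoffman--Wielandt theorem: unitary diagonalization of both normal matrices, reduction of $\|B-A\|_F^2$ to the linear functional $\sum_{i,j}|W_{ij}|^2|\mu_j-\lambda_i|^2$ on the Birkhoff polytope, and extraction of a minimizing permutation via the Birkhoff--von Neumann decomposition. Note, however, that the paper does not prove this statement at all --- it records it in the Preliminaries as a known result to be quoted later (in the proof of the similarity-invariance theorem), so there is no in-paper argument to compare against; your write-up simply supplies the classical proof that the paper takes as given. All the individual steps check out, including the entrywise identity $(WD_B - D_AW)_{ij} = W_{ij}(\mu_j - \lambda_i)$ and the observation that double stochasticity of $(|W_{ij}|^2)$ requires both $WW^*=I$ and $W^*W=I$.
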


\section{The Pseudometric $d_{S}^{(p)}$ and similarity invariance}

It is usual to use the pseudo-distance on persistent barcodes to metric the (filtered) topological spaces. In this section, we introduce the pseudometric $d_{S}$ on  finite type barcodes, which is invariant in the sense of similarity.

Let $\mathcal{B}=\{I_{i}\}_{i=1}^{n}$ be a barcode of finite bounded type, where each $I_{i}$ is a bouneded interval $(a_{i},b_{i}]$ and an interval could appear multiple times in $\mathcal{B}$. Then we set $f_{\mathcal{B}}=\{f_{i}\}_{i=1}^{n}$, where each $f_{i}$ is the characteristic function on the interval $I_{i}=(a_{i},b_{i}]$, i.e.,
$$ f_{i}(x)=\left\{
\begin{aligned}
	&1, ~x \in (a_{i},b_{i}],\\
	&0, ~otherwise.
\end{aligned}
\right.
$$
Furthermore, the sequence of functions $f_{\mathcal{B}}$ naturally induces  the Gram matrix $G_{\mathcal{B}}$ defined by
$$
G_{\mathcal{B}} = \begin{pmatrix}
	\langle f_{1}, f_{1} \rangle & \langle f_{1}, f_{2}\rangle & \cdots & \langle f_{1}, f_{n}\rangle \\
	\langle f_{2}, f_{1}\rangle & \langle f_{2}, f_{2}\rangle & \cdots & \langle f_{2}, f_{n}\rangle \\
	\vdots & \vdots & \ddots & \vdots \\
	\langle f_{n}, f_{1}\rangle & \langle f_{n}, f_{2}\rangle & \cdots & \langle f_{n}, f_{n}\rangle
\end{pmatrix},
$$
where $\langle f_{i}, f_{j} \rangle =\int_{\mathbb{R}} \overline{f_{j}(x)}\cdot f_{i}(x)dx= \int_{(a_{i},b_{i}] \bigcap (a_{j},b_{j}]}1 dx$.

Obviously, the Gram matrix $G_{\mathcal{B}}$ is positive semi-definite which means $x^*G_{\mathcal{B}}x\geq 0$ for any $n$-dimensional vector $x$, and is non-negative which means every element in the matrix is non-negative. As well known, the Perron-Frobenius Theorem tells us that the operator norm of $G_{\mathcal{B}}$ is just the maximal (positive) eigenvalue. In order to eliminate the influence of scale, we consider ${G_{\mathcal{B}}}/{\Vert  G_{\mathcal{B}} \Vert}$, the normalization of $G_{\mathcal{B}}$, where $\Vert G_{\mathcal{B}}\Vert$ is the operator norm of $G_{\mathcal{B}}$.

To compare the matrices of different dimensions, we extend the $n$-dimensional Gram matrix ${G_{\mathcal{B}}}/{\Vert  G_{\mathcal{B}} \Vert}$ to an infinite-dimensional matrix $\widetilde{G}_{\mathcal{B}}$ in a natural way. More precisely, let
\[
\widetilde{G}_{\mathcal{B}}=\frac{G_{\mathcal{B}}}{\Vert  G_{\mathcal{B}} \Vert }\oplus \mathbf{0}=\begin{pmatrix}
	{G_{\mathcal{B}}}/{\Vert  G_{\mathcal{B}} \Vert}  & \mathbf{0} \\
	\mathbf{0}   & \mathbf{0}
\end{pmatrix},
\]
where $\mathbf{0}$ is the zero operator on an infinite-dimensional space. Therefore,
$\widetilde{G}_{\mathcal{B}}$ is an infinite-dimensional semi-definite  non-negative matrix with only finite nonzero elements. Let $\Lambda_{\mathcal{B}} = \{\lambda_{i}\}_{i=1}^{\infty}$ be the eigenvalues of $\widetilde{G}_{\mathcal{B}}$. One can see that there are only finite nonzero (positive) eigenvalues.

Let  $\Lambda = \{\lambda_{i}\}_{i=1}^{\infty}$ and $\Gamma = \{\mu_{j}\}_{j=1}^{\infty}$ be two sequences of non-negative numbers with finite nonzero (positive) elements, respectively. A one-one correspondence $\sigma: \mathbb{N}\rightarrow \mathbb{N}$ induces a matching between the two sequences. Then, for any $1\le p \le \infty$, the infimum of the distance over all matchings,
\[
d_{S}^{(p)}(\Lambda, \Gamma)=\inf\limits_{\sigma}\sqrt[p]{\sum\limits_{i=1}^{\infty} \vert\lambda_{i}-\mu_{\sigma(i)}\vert^{p}}
\]
is a distance between two sequences. Furthermore, for any $1\le p \le \infty$, we could introduce a pseudometric $d_{S}^{(p)}$ on the barcodes of finite bounded type as follows.

\begin{definition}\label{ds}
	Let $\mathcal{B}$ and $\mathcal{C}$ be two barcodes of finite bounded type, and let $\Lambda_{\mathcal{B}} = \{\lambda_{i}\}_{i=1}^{\infty}$ and $\Lambda_{\mathcal{C}}  = \{\mu_{j}\}_{j=1}^{\infty}$ be the sequences of eigenvalues induced by the barcodes $\mathcal{B}$ and $\mathcal{C}$, respectively. Define the pseudo-distance between $\mathcal{B}$ and $\mathcal{C}$ by
	\[
	d_{S}^{(p)}(\mathcal{B}, \mathcal{C}) := \inf\limits_{\sigma}\sqrt[p]{\sum\limits_{i=1}^{\infty} \vert\lambda_{i}-\mu_{\sigma(i)}\vert^{p}},
	\]
	where $\sigma$ is taken all over the matchings between the two eigenvalue sequences $\Lambda_{\mathcal{B}}$ and $\Lambda_{\mathcal{C}}$.
\end{definition}

\begin{remark}
	Notice that if the barcode $\mathcal{B}$ is not of finite bounded type, the Gram matrix $G_{\mathcal{B}}$ may be unbounded or may have continuous spectrum. Then, the above process is only valid for barcodes of finite bounded type. However, in practical applications and calculations, we are actually dealing with the cases of finite bounded type induced by persistent homology groups. So, we haven't lost anything important.
\end{remark}

For any $1\le p \le \infty$, we will show that $d_{S}^{(p)}$ is a pseudometric on the set of all barcodes of finite bounded type. Recall that a pseudometric is a metric except for that the distance between two different elements is possible to be zero.

\begin{theorem}
	For any $1\le p \le \infty$, $d_{S}^{(p)}$ is a pseudometric on the set of all barcodes of finite bounded type.
\end{theorem}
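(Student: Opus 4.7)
The plan is to verify the four defining properties of a pseudometric, namely non-negativity, symmetry, reflexivity ($d_S^{(p)}(\mathcal{B},\mathcal{B})=0$), and the triangle inequality, for the function on barcodes introduced in Definition \ref{ds}. Non-negativity is immediate from the definition, since every summand $|\lambda_i - \mu_{\sigma(i)}|^p$ is non-negative. Symmetry follows by noting that for any matching $\sigma:\mathbb{N}\to\mathbb{N}$ between $\Lambda_{\mathcal{B}}$ and $\Lambda_{\mathcal{C}}$, the inverse $\sigma^{-1}$ is a matching between $\Lambda_{\mathcal{C}}$ and $\Lambda_{\mathcal{B}}$ producing the same cost, so the two infima coincide. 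For $d_S^{(p)}(\mathcal{B},\mathcal{B})=0$, one takes $\sigma$ to be the identity permutation on $\mathbb{N}$ applied to a single fixed enumeration of $\Lambda_{\mathcal{B}}$, yielding cost $0$.

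The main obstacle is the triangle inequality, and this is where I would spend essentially all the work. Given three barcodes $\mathcal{B},\mathcal{C},\mathcal{D}$ of finite bounded type with associated eigenvalue sequences $\Lambda_{\mathcal{B}}=\{\lambda_i\}$, $\Lambda_{\mathcal{C}}=\{\mu_i\}$, $\Lambda_{\mathcal{D}}=\{\nu_i\}$, and given any two matchings $\sigma_1:\Lambda_{\mathcal{B}}\to\Lambda_{\mathcal{C}}$ and $\sigma_2:\Lambda_{\mathcal{C}}\to\Lambda_{\mathcal{D}}$ (viewed as bijections of $\mathbb{N}$), I would form the composite matching $\sigma=\sigma_2\circ\sigma_1:\Lambda_{\mathcal{B}}\to\Lambda_{\mathcal{D}}$ and apply the pointwise triangle inequality
\[
|\lambda_i - \nu_{\sigma(i)}| \;\le\; |\lambda_i - \mu_{\sigma_1(i)}| \;+\; |\mu_{\sigma_1(i)} - \nu_{\sigma_2(\sigma_1(i))}|.
\]
For $1\le p<\infty$, Minkowski's inequality in $\ell^p(\mathbb{N})$ then gives
\[
\Bigl(\sum_{i=1}^\infty |\lambda_i - \nu_{\sigma(i)}|^p\Bigr)^{1/p} \le \Bigl(\sum_{i=1}^\infty |\lambda_i - \mu_{\sigma_1(i)}|^p\Bigr)^{1/p} + \Bigl(\sum_{i=1}^\infty |\mu_{\sigma_1(i)} - \nu_{\sigma_2(\sigma_1(i))}|^p\Bigr)^{1/p}.
\]
The second term on the right is the key bookkeeping step: because $\sigma_1$ is a bijection of $\mathbb{N}$, reindexing by $j=\sigma_1(i)$ turns it into $\bigl(\sum_j |\mu_j - \nu_{\sigma_2(j)}|^p\bigr)^{1/p}$, which is exactly the cost of the matching $\sigma_2$. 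Taking the infimum over $\sigma_1$ and $\sigma_2$ independently then yields $d_S^{(p)}(\mathcal{B},\mathcal{D})\le d_S^{(p)}(\mathcal{B},\mathcal{C})+d_S^{(p)}(\mathcal{C},\mathcal{D})$. The case $p=\infty$ is handled by the same composite-matching device, replacing the Minkowski step with the elementary estimate $\sup_i(a_i+b_i)\le \sup_i a_i + \sup_i b_i$.

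Two small points deserve care. First, all sums are actually finite because each $\widetilde{G}_{\mathcal{B}}$ has only finitely many nonzero eigenvalues, so there are no convergence issues; one need only check that every reindexing above preserves the (finite) value of the sum, which is clear. Second, the eigenvalue sequences are defined only up to reordering (they are multisets extended by zeros), so one should point out that $d_S^{(p)}$ does not depend on the chosen enumeration: any two enumerations differ by a bijection of $\mathbb{N}$, and the infimum over all matchings absorbs this ambiguity. Once these points are noted, the four pseudometric axioms are established, and the theorem follows; the fact that $d_S^{(p)}$ is only a pseudometric and not a metric is consistent with the situation, since distinct barcodes can induce Gram matrices with identical spectra.
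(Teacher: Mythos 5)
Your proposal is correct and follows essentially the same route as the paper's proof: the three easy axioms are read off from the definition, and the triangle inequality is obtained by composing the two matchings, applying Minkowski's inequality pointwise, and reindexing the middle term via the bijection. The only cosmetic differences are that the paper selects matchings that actually attain the infima (possible since only finitely many eigenvalues are nonzero) rather than passing to the infimum at the end, and that you additionally spell out the $p=\infty$ case and the independence from the chosen enumeration.
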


\begin{proof}
	For any barcodes $\mathcal{B}$ and $\mathcal{C}$ of finite bounded type, it follows from the Definition \ref{ds} that $d_{S}^{(p)}(\mathcal{B}, \mathcal{C})\ge 0$, $d_{S}^{(p)}(\mathcal{B}, \mathcal{B})=0$ and $d_{S}^{(p)}(\mathcal{B}, \mathcal{C})=d_{S}^{(p)}(\mathcal{C}, \mathcal{B})$. Now, it suffices to prove the triangle inequality for $d_{S}^{(p)}$.
	
	Given any three barcodes $\mathcal{B}$, $\mathcal{C}$ and $\mathcal{D}$. Let $\widetilde{G}_{\mathcal{B}}$, $\widetilde{G}_{\mathcal{C}}$ and $\widetilde{G}_{\mathcal{D}}$  be the sequences of eigenvalues induced by $\mathcal{B}$, $\mathcal{C}$ and $\mathcal{D}$, respectively. Suppose that $\Lambda_{\mathcal{B}} = \{\lambda_{i}\}_{i=1}^{\infty}$, $\Lambda_{\mathcal{C}} = \{\mu_{j}\}_{j=1}^{\infty}$ and $\Lambda_{\mathcal{D}} = \{\nu_{k}\}_{k=1}^{\infty}$ are the eigenvalue sequences of $\widetilde{G}_{\mathcal{B}}$, $\widetilde{G}_{\mathcal{C}}$ and $\widetilde{G}_{\mathcal{D}}$, respectively.
	
	Since each sequence of eigenvalues only has finite non-zero elements, there exist two matchings $\sigma_{1}$ and  $\sigma_{2}$ such that
	\[
	d_{S}^{(p)}(\mathcal{B},\mathcal{C}) = \inf\limits_{\sigma}\sqrt[p]{\sum\limits_{i=1}^{\infty} \vert\lambda_{i}-\mu_{\sigma(i)}\vert^{p}}
	=\sqrt[p]{\sum\limits_{i=1}^{\infty} \vert\lambda_{i}-\mu_{\sigma_{1}(i)}\vert^{p}}
	\]
	and
	\begin{align*}
		d_{S}^{(p)}(\mathcal{C}, \mathcal{D}) &= \inf\limits_{\sigma}\sqrt[p]{\sum\limits_{i=1}^{\infty} \vert\mu_{i}-\nu_{\sigma(i)}\vert^{p}}\\
		&=\sqrt[p]{\sum\limits_{i=1}^{\infty} \vert\mu_{i}-\nu_{\sigma_{2}\sigma_{1}^{-1}(i)}\vert^{p}} \\
		&=\sqrt[p]{\sum\limits_{i=1}^{\infty} \vert\mu_{\sigma_{1}(i)}-\nu_{\sigma_{2}(i)}\vert^{p}}.
	\end{align*}
	Then,
	\begin{align*}
		d_{S}^{(p)}(\mathcal{B},\mathcal{D}) &= \inf\limits_{\sigma}\sqrt[p]{\sum\limits_{i=1}^{\infty} \vert\lambda_{i}-\nu_{\sigma(i)}\vert^{p}}\\
		&\leq \sqrt[p]{\sum\limits_{i=1}^{\infty} \vert\lambda_{i}-\nu_{\sigma_{2}(i)}\vert^{p}}\\
		&\leq \sqrt[p]{\sum\limits_{i=1}^{\infty} \vert\lambda_{i}-\mu_{\sigma_{1}(i)}\vert^{p}}+ \sqrt[p]{\sum\limits_{i=1}^{\infty} \vert\mu_{\sigma_{1}(i)}-\nu_{\sigma_{2}(i)}\vert^{p}}\\
		&= d_{S}^{(p)}(\mathcal{B}, \mathcal{C})+d_{S}^{(p)}(\mathcal{C}, \mathcal{D}).
	\end{align*}
	This finishes the proof.
\end{proof}

It is an important case of  $p=2$. Considering a matrix $\widetilde{G}_{\mathcal{B}}$, we could arrange its eigenvalues $\Lambda_{\mathcal{B}} = \{\lambda_{i}\}_{i=1}^{\infty}$ in descending order, i.e., $\lambda_{1}\ge \lambda_{2}\ge\cdots$. Then, for any two sequence of eigenvalues $\Lambda_{\mathcal{B}} = \{\lambda_{i}\}_{i=1}^{\infty}$ and $\Lambda_{\mathcal{C}} = \{\mu_{j}\}_{j=1}^{\infty}$ in descending order, the distance $d_{S}^{(2)}(\mathcal{B}, \mathcal{C})$ is achieved in the matching $id:\mathbb{N}\rightarrow\mathbb{N}$. This provides a simple formula for the pseudometric $d_{S}^{(2)}$.

\begin{proposition}\label{l2case}
	Let $\mathcal{B}$ and $\mathcal{C}$ be two barcodes of finite bounded type, and let $\Lambda_{\mathcal{B}} = \{\lambda_{i}\}_{i=1}^{\infty}$ and $\Lambda_{\mathcal{C}} = \{\mu_{j}\}_{j=1}^{\infty}$ be two sequences of eigenvalues  satisfying $\lambda_{1}\geq \lambda_{2}\geq \ldots$ and $\mu_{1}\geq \mu_{2}\geq \ldots$. Then,
	\[
	d_{S}^{(2)}(\mathcal{B}, \mathcal{C})=\inf\limits_{\sigma}\sqrt{ \sum\limits_{i=1}^{\infty} \vert\lambda_{i}-\mu_{\sigma(i)}\vert^{2}} = \sqrt{ \sum\limits_{i=1}^{\infty} \vert\lambda_{i}-\mu_{i}\vert^{2}}.
	\]
\end{proposition}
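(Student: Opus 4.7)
The plan is to reduce this to the classical rearrangement inequality by expanding the squared distance. I would start by writing
$$
\sum_{i=1}^{\infty} |\lambda_{i}-\mu_{\sigma(i)}|^{2} \;=\; \sum_{i} \lambda_{i}^{2} \;-\; 2\sum_{i} \lambda_{i}\mu_{\sigma(i)} \;+\; \sum_{i} \mu_{\sigma(i)}^{2},
$$
and observing that $\sum_{i} \lambda_{i}^{2}$ is a fixed finite constant (only finitely many $\lambda_{i}$ are nonzero), while $\sum_{i} \mu_{\sigma(i)}^{2}=\sum_{j} \mu_{j}^{2}$ is likewise invariant under the bijection $\sigma$ since the $\mu_{j}^{2}$ form an absolutely summable family. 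Hence minimizing the left-hand side over $\sigma$ is equivalent to maximizing the cross term $\sum_{i} \lambda_{i}\mu_{\sigma(i)}$.

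Because only finitely many $\lambda_{i}$ are nonzero (say $\lambda_{i}=0$ for $i>N$), the cross term reduces to $\sum_{i=1}^{N}\lambda_{i}\mu_{\sigma(i)}$, and the problem splits into two coupled sub-optimizations: which $N$ values of the multiset $\{\mu_{j}\}$ to select via $\sigma$, and how to pair them with the sorted $\lambda_{i}$'s. I would handle both at once through a pairwise swap argument. Concretely, whenever indices $i<k$ satisfy $\lambda_{i}\ge\lambda_{k}$ and $\mu_{\sigma(i)}<\mu_{\sigma(k)}$, exchanging the values $\sigma(i)$ and $\sigma(k)$ changes the cross term by exactly $(\lambda_{i}-\lambda_{k})(\mu_{\sigma(k)}-\mu_{\sigma(i)})\ge 0$, never decreasing it. Allowing $k>N$ (where $\lambda_{k}=0$) subsumes the choice-of-multiset part of the argument, because this lets a currently-unused large $\mu_{\sigma(k)}$ be swapped in against a smaller $\mu_{\sigma(i)}$. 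Iterating such swaps transforms any bijection $\sigma$ into one that agrees with the identity on $\{1,\dots,N\}$ while monotonically non-decreasing the cross term, which combined with the bookkeeping above yields the claimed formula.

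The main obstacle is ensuring that the infimum over the uncountable set of bijections of $\mathbb{N}$ is actually attained at the identity, and that the iterative swap procedure terminates. This is where the finite bounded type hypothesis is essential: after collecting the finitely many indices on which $\lambda$ is nonzero together with the finitely many indices where any currently relevant $\mu$-value sits, only a bounded number of inversions can exist, so finitely many swaps suffice. A routine finite-combinatorial argument then establishes both termination and the fact that the value $\sqrt{\sum_{i}|\lambda_{i}-\mu_{i}|^{2}}$ is simultaneously an upper bound (via the identity matching) and a lower bound (via the swap monotonicity), giving equality.
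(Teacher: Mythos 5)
Your proposal is correct, but it takes a genuinely different route from the paper. You expand $\sum_i|\lambda_i-\mu_{\sigma(i)}|^2$ into $\sum_i\lambda_i^2-2\sum_i\lambda_i\mu_{\sigma(i)}+\sum_i\mu_{\sigma(i)}^2$ (legitimate here, since all three series have finitely many nonzero terms and $\sum_i\mu_{\sigma(i)}^2=\sum_j\mu_j^2$ for any bijection $\sigma$ of $\mathbb{N}$), reduce the minimization to maximizing the cross term $\sum_i\lambda_i\mu_{\sigma(i)}$, and settle that by the rearrangement inequality via pairwise swaps; your observation that allowing the swap index $k$ to exceed $N$ (where $\lambda_k=0$) simultaneously handles which $\mu$-values get selected is the right way to deal with the infinite index set, and the termination argument is sound because only the finitely many indices carrying a positive $\lambda$ or $\mu$ can participate in a strictly improving swap. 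The paper instead stays with the squared differences directly: it first reduces the infimum over bijections of $\mathbb{N}$ to a minimum over permutations of $\{1,\dots,N\}$ and then argues by induction on $N$, splitting on whether $s(1)=n$ or $s(1)=p<n$ and using telescoping sums together with the monotonicity of the two sequences. Your argument is shorter and makes transparent why $p=2$ is special (the inner-product structure kills the diagonal terms), whereas the paper's induction template has the advantage that it transfers essentially verbatim to the $p=1$ case (Proposition~\ref{l1case}), where no such cross-term expansion is available. One cosmetic caveat: when the $\mu_j$ have ties, your swap procedure yields a matching that agrees with the identity only up to permuting equal $\mu$-values, but since the cost depends only on the multiset of paired values, this does not affect the conclusion.
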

\begin{proof}
	Since $\mathcal{B}$ and $\mathcal{C}$ are barcodes of finite bounded type, $\Lambda_{\mathcal{B}} = \{\lambda_{i}\}_{i=1}^{\infty}$ and $\Lambda_{\mathcal{C}} = \{\mu_{j}\}_{j=1}^{\infty}$ are two sequences of non-negative numbers with finite positive elements. Consequently,  there exists  a positive integer $N$ such that
	\[
	\inf\limits_{\sigma} \sqrt{\sum\limits_{i=1}^{\infty} \vert\lambda_{i}-\mu_{\sigma(i)}\vert^{2}} = \min\limits_{\pi}  \sqrt{\sum\limits_{i=1}^{N} \vert\lambda_{i}-\mu_{\pi(i)}\vert^{2}},
	\]
	where $\sigma$ is taken all over the matchings between the two eigenvalue sequences $\Lambda_{\mathcal{B}}$ and $\Lambda_{\mathcal{C}}$, and $\pi$ is taken all over the permutations on $\{1,2, \cdots, N\}$. Then, it suffices to prove
	
	\begin{equation}\label{l2f}
		\min\limits_{\pi}  \sqrt{\sum\limits_{i=1}^{N} \vert\lambda_{i}-\mu_{\pi(i)}\vert^{2}}= \sqrt{ \sum\limits_{i=1}^{N} \vert\lambda_{i}-\mu_{i}\vert^{2}}.
	\end{equation}
	Let us prove the equation (\ref{l2f}) by the inductive method of $N$. If 
	$N=1$, it is obvious that the equation (\ref{l2f}) holds. 
	Now, assume that the equation (\ref{l2f}) holds for all $N=1,2, \cdots,n-1$, i.e., for any $k=1,2, \cdots,n-1$,
	\begin{equation}\label{assume}
		\min\limits_{\pi_k}  \sqrt{\sum\limits_{i=1}^{k} \vert\lambda_{i}-\mu_{\pi_k(i)}\vert^{2}}=\sqrt{\sum\limits_{i=1}^{k} \vert\lambda_{i}-\mu_{i}\vert^{2}},
	\end{equation}
	where $\pi_k$ is taken all over the permutations on $\{1,2, \cdots, k\}$.
	
	Consider $N = n$. Given any permutation $s$ on $\{1,2, \cdots, N\}$. We will show that  
	\[
	\sqrt{\sum\limits_{i=1}^{n} \vert\lambda_{i}-\mu_{s(i)}\vert^{2}}\ge \sqrt{\sum\limits_{i=1}^{n} \vert\lambda_{i}-\mu_{i}\vert^{2}}.
	\]
	
	If $s(1)=n$, then it follows the assumption (\ref{assume}) and the monotone decreasing of $\lambda_{i}$ and $\mu_{i}$ that
	\begin{align*}
		&\sum\limits_{i=1}^{n} \vert\lambda_{i}-\mu_{s(i)}\vert^{2}- \sum\limits_{i=1}^{n} \vert\lambda_{i}-\mu_{i}\vert^{2} \\
		=&(\lambda_{1}-\mu_{n})^{2}+\sum\limits_{i=2}^{n} (\lambda_{i}-\mu_{s(i)})^{2}- \sum\limits_{i=1}^{n} (\lambda_{i}-\mu_{i})^{2} \\
		\ge & \left((\lambda_{1}-\mu_{n})^{2}+\sum\limits_{i=1}^{n-1} (\lambda_{i+1}-\mu_{i})^{2}\right)-\left((\lambda_{n}-\mu_{n})^{2}+\sum\limits_{i=1}^{n-1} (\lambda_{i}-\mu_{i})^{2}\right)\\
		=& \left((\lambda_{1}-\mu_{n})^{2}-(\lambda_{n}-\mu_{n})^{2}\right)+\sum\limits_{i=1}^{n-1} \left((\lambda_{i+1}-\mu_{i})^{2}-(\lambda_{i}-\mu_{i})^{2}\right)\\
		=& \left(\lambda_{1}^{2}-\lambda_{n}^{2}-2\mu_{n}(\lambda_{1}-\lambda_{n})\right)+
		\sum\limits_{i=1}^{n-1}\left(\lambda_{i+1}^{2}-\lambda_{i}^{2}+2\mu_{i}(\lambda_{i}-\lambda_{i+1})\right)\\
		=&-2\mu_{n}(\lambda_{1}-\lambda_{n})+\sum\limits_{i=1}^{n-1}2\mu_{i}(\lambda_{i}-\lambda_{i+1})\\
		\ge&-2\mu_{n}(\lambda_{1}-\lambda_{n})+\sum\limits_{i=1}^{n-1}2\mu_{n}(\lambda_{i}-\lambda_{i+1})\\
		=& 0.
	\end{align*}
	If $s(1)=p<n$, then it follows the assumption (\ref{assume}) that
	\begin{align*}
		&\sum\limits_{i=1}^{n} \vert\lambda_{i}-\mu_{s(i)}\vert^{2} \\
		=& (\lambda_{1}-\mu_{p})^{2}+\sum\limits_{i=2}^{n} (\lambda_{i}-\mu_{s(i)})^{2} \\
		\ge & (\lambda_{1}-\mu_{p})^{2}+\sum\limits_{i=1}^{p-1} (\lambda_{i+1}-\mu_{i})^{2}+\sum\limits_{i=p+1}^{n} (\lambda_{i}-\mu_{i})^{2} \\
		\ge& \sum\limits_{i=1}^{p} (\lambda_{i}-\mu_{i})^{2}+\sum\limits_{i=p+1}^{n} (\lambda_{i}-\mu_{i})^{2} \\
		=&\sum\limits_{i=1}^{n} (\lambda_{i}-\mu_{i})^{2}.
	\end{align*}
	
	This finishes the proof.
\end{proof}

Moreover, the pseudometric $d_{S}^{(2)}$ is invariant in the sense of similarity. More precisely, if $\widetilde{G}_{\mathcal{B}}$ is similar to $\widetilde{G}_{\mathcal{B'}}$ and $\widetilde{G}_{\mathcal{C}}$ is similar to $\widetilde{G}_{\mathcal{C'}}$, then $d_{S}^{(2)}(\mathcal{B}, \mathcal{C})=d_{S}^{(2)}(\mathcal{B'}, \mathcal{C'})$. We show the similarity invariance of  the pseudometric $d_{S}^{(2)}$ as the following theorem.

\begin{theorem}\label{similar_orbit}
	Let $\mathcal{B}$ and $\mathcal{C}$ be two barcodes of finite bounded type, and let $\widetilde{G}_{\mathcal{B}}$ and $\widetilde{G}_{\mathcal{C}}$ be the matrices induced by $\mathcal{B}$ and $\mathcal{C}$, respectively. Let $\mathscr{S}(\widetilde{G}_{\mathcal{B}})$ and $\mathscr{S}(\widetilde{G}_{\mathcal{C}})$ be the similarity orbits of $\widetilde{G}_{\mathcal{B}}$ and $\widetilde{G}_{\mathcal{C}}$. Then
	$$
	d_{F}(\mathscr{S}(\widetilde{G}_{\mathcal{B}}),\mathscr{S}(\widetilde{G}_{\mathcal{C}}))=d_{S}^{(2)}(\widetilde{G}_{\mathcal{B}}, \widetilde{G}_{\mathcal{C}}).
	$$
\end{theorem}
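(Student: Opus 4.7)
The plan is to prove the equality by establishing both inequalities, leveraging that $\widetilde{G}_{\mathcal{B}}$ and $\widetilde{G}_{\mathcal{C}}$ are Hermitian (hence normal) operators of finite rank. Throughout, let $\{\lambda_i\}_{i=1}^{\infty}$ and $\{\mu_i\}_{i=1}^{\infty}$ denote the eigenvalue sequences of $\widetilde{G}_{\mathcal{B}}$ and $\widetilde{G}_{\mathcal{C}}$ arranged in descending order (only finitely many are nonzero by construction).

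For the inequality $d_F \leq d_S^{(2)}$, I would invoke the spectral theorem: each of $\widetilde{G}_{\mathcal{B}}$ and $\widetilde{G}_{\mathcal{C}}$ is unitarily equivalent to the diagonal operator carrying its eigenvalues in descending order; denote these diagonal representatives by $D_{\mathcal{B}}$ and $D_{\mathcal{C}}$. Since unitary equivalence is a special case of similarity, $D_{\mathcal{B}} \in \mathscr{S}(\widetilde{G}_{\mathcal{B}})$ and $D_{\mathcal{C}} \in \mathscr{S}(\widetilde{G}_{\mathcal{C}})$, and a direct computation gives
$$\|D_{\mathcal{B}} - D_{\mathcal{C}}\|_F = \sqrt{\sum_{i=1}^{\infty} |\lambda_i - \mu_i|^2},$$
which by Proposition \ref{l2case} equals $d_S^{(2)}(\widetilde{G}_{\mathcal{B}}, \widetilde{G}_{\mathcal{C}})$, yielding the upper bound on the orbit distance.

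For the reverse inequality $d_F \geq d_S^{(2)}$, I would fix arbitrary $A' \in \mathscr{S}(\widetilde{G}_{\mathcal{B}})$ and $B' \in \mathscr{S}(\widetilde{G}_{\mathcal{C}})$ and show $\|A' - B'\|_F \geq d_S^{(2)}$. Since similarity preserves spectrum with multiplicity, $A'$ and $B'$ still have eigenvalue lists $\{\lambda_i\}$ and $\{\mu_i\}$; moreover both are finite-rank (rank is a similarity invariant), so $A' - B'$ is finite-rank and the entire comparison may be carried out on a finite-dimensional ambient subspace containing the ranges and cokernels of both operators. In that finite-dimensional setting, Theorem \ref{Hoff} yields a permutation $\pi$ with $\sqrt{\sum_i |\lambda_i - \mu_{\pi(i)}|^2} \leq \|A' - B'\|_F$, and Proposition \ref{l2case} identifies the minimum of the left-hand side over all permutations as $d_S^{(2)}$.

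The hardest step will be the application of Theorem \ref{Hoff}, because Hoffman-Wielandt is stated for normal matrices whereas a generic element of the similarity orbit of a normal operator is only diagonalizable and need not itself be normal. I expect to bridge this gap either by a direct substitution argument, producing Hermitian representatives $A'' \in \mathscr{S}(\widetilde{G}_{\mathcal{B}})$ and $B'' \in \mathscr{S}(\widetilde{G}_{\mathcal{C}})$ (via their spectral decompositions on the common ambient subspace) with $\|A'' - B''\|_F \leq \|A' - B'\|_F$, or by an approximation argument exploiting that the unitary orbit of a Hermitian operator is contained in its similarity orbit, so that the infimum defining $d_F$ is attained within the unitary orbits where Theorem \ref{Hoff} applies verbatim. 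The bookkeeping for the infinite-dimensional zero tail is automatic once the finite-rank core is isolated, since unused zero eigenvalues contribute nothing to the matching on either side.
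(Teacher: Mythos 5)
Your first half is exactly the paper's argument and is fine: the diagonal representatives $D_{\mathcal{B}},D_{\mathcal{C}}$ lie in the similarity orbits, and $\Vert D_{\mathcal{B}}-D_{\mathcal{C}}\Vert_{F}$ equals $d_{S}^{(2)}(\mathcal{B},\mathcal{C})$ by Proposition \ref{l2case}, giving $d_{F}\le d_{S}^{(2)}$. For the reverse inequality you have put your finger on precisely the right spot: Theorem \ref{Hoff} requires \emph{both} matrices to be normal, while a generic element of the similarity orbit of a Hermitian operator is only diagonalizable. (The paper's own proof applies Hoffman--Wielandt to arbitrary $T\sim\widetilde{G}_{\mathcal{B}}$, $S\sim\widetilde{G}_{\mathcal{C}}$ without addressing this, so your proposal is more careful than the published argument.) The difficulty, however, is not one you can bridge: the inequality $d_{F}\ge d_{S}^{(2)}$ is actually false for similarity orbits. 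Take $\mathcal{B}$ with a single bar, so $\widetilde{G}_{\mathcal{B}}$ has eigenvalues $1,0,0,\dots$, and $\mathcal{C}$ with two overlapping bars chosen so that $\widetilde{G}_{\mathcal{C}}$ has eigenvalues $1,2\delta,0,\dots$ for a small $\delta>0$; then $d_{S}^{(2)}(\mathcal{B},\mathcal{C})=2\delta$. For large $c>0$ put $c'=\sqrt{c^{2}-\delta+\delta^{2}}$ and
\[
T_{c}=\begin{pmatrix} \frac{1}{2}+c & 1 \\ \frac{1}{4}-c^{2} & \frac{1}{2}-c \end{pmatrix},
\qquad
S_{c}=\begin{pmatrix} \frac{1}{2}+\delta+c' & 1 \\ \frac{1}{4}-c^{2} & \frac{1}{2}+\delta-c' \end{pmatrix}.
\]
A trace--determinant check shows $T_{c}$ has eigenvalues $\{1,0\}$ and $S_{c}$ has eigenvalues $\{1,2\delta\}$, both with distinct eigenvalues and hence diagonalizable, so $T_{c}\oplus\mathbf{0}\in\mathscr{S}(\widetilde{G}_{\mathcal{B}})$ and $S_{c}\oplus\mathbf{0}\in\mathscr{S}(\widetilde{G}_{\mathcal{C}})$. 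But $T_{c}-S_{c}$ is diagonal with entries $(c-c')-\delta$ and $-(c-c')-\delta$, and $c-c'=(\delta-\delta^{2})/(c+c')\to 0$, so $\Vert T_{c}-S_{c}\Vert_{F}\to\sqrt{2}\,\delta<2\delta$. Hence $d_{F}(\mathscr{S}(\widetilde{G}_{\mathcal{B}}),\mathscr{S}(\widetilde{G}_{\mathcal{C}}))<d_{S}^{(2)}(\mathcal{B},\mathcal{C})$.

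This also shows why neither of your two proposed bridges can succeed. Any Hermitian (indeed any normal) representatives $A''\in\mathscr{S}(\widetilde{G}_{\mathcal{B}})$, $B''\in\mathscr{S}(\widetilde{G}_{\mathcal{C}})$ carry the spectra $\{1,0,0,\dots\}$ and $\{1,2\delta,0,\dots\}$, so Hoffman--Wielandt applied to \emph{them} forces $\Vert A''-B''\Vert_{F}\ge 2\delta>\Vert T_{c}-S_{c}\Vert_{F}$; there is no norm-decreasing substitution into the Hermitian part of the orbit, and the infimum over the similarity orbits is strictly smaller than the infimum over the unitary orbits, so it is not attained there. The equality you are asked to prove does hold verbatim if $\mathscr{S}$ is replaced by the unitary orbit (unitary conjugates of Hermitian operators remain Hermitian, and then both your argument and the paper's go through), which is evidently the intended statement; but as written, the step you flagged as the hardest is not a gap to be closed but the point at which the claim fails.
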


\begin{proof}
	Let $\Lambda_{\mathcal{B}} = \{\lambda_{i}\}_{i=1}^{\infty}$ and $\Lambda_{\mathcal{C}} = \{\mu_{j}\}_{j=1}^{\infty}$ be the eigenvalue sequences of $\widetilde{G}_{\mathcal{B}}$ and $\widetilde{G}_{\mathcal{C}}$ satisfying $\lambda_{1}\geq \lambda_{2}\geq \ldots$ and $\mu_{1}\geq \mu_{2}\geq \ldots$. Since $\mathcal{B}$ and $\mathcal{C}$ are barcodes of finite bounded type, we could assume that ${G}_{\mathcal{B}}/\|{G}_{\mathcal{B}}\|$ is a $m\times m$  matrix and ${G}_{\mathcal{C}}/\|{G}_{\mathcal{C}}\|$ is a $n\times n$  matrix. Then, for any $k\ge N=\max\{m, n\}$, $\lambda_{k}=\mu_{k}=0$. By the Proposition \ref{l2case},
	\[
	d_{S}^{(2)}(\mathcal{B}, \mathcal{C})=\inf\limits_{\sigma}\sqrt{ \sum\limits_{i=1}^{\infty} \vert\lambda_{i}-\mu_{\sigma(i)}\vert^{2}} = \sqrt{ \sum\limits_{i=1}^{\infty} \vert\lambda_{i}-\mu_{i}\vert^{2}}= \sqrt{ \sum\limits_{i=1}^{N} \vert\lambda_{i}-\mu_{i}\vert^{2}}.
	\]
	Consequently, for any $T\sim \widetilde{G}_{\mathcal{B}}$ and $S\sim \widetilde{G}_{\mathcal{C}}$, it follows from Hoffman-Wielandt Theorem (Theorem \ref{Hoff} in the present paper) that there exits a permutation $\pi:\{1,2,\cdots, N\}\rightarrow\{1,2,\cdots, N\}$ such that
	\[
	\inf\limits_{\sigma}\sqrt{ \sum\limits_{i=1}^{\infty} \vert\lambda_{i}-\mu_{\sigma(i)}\vert^{2}} =  \sqrt{\sum\limits_{i=1}^{N}\vert\mu_{\pi(i)}-\lambda_{i}\vert^{2}}\leq \Vert S-T \Vert_{F}.
	\]
	Therefore, 
	\[
	d_{S}^{(2)}(\mathcal{B}, \mathcal{C})\le d_{F}(\mathscr{S}(\widetilde{G}_{\mathcal{B}}),\mathscr{S}(\widetilde{G}_{\mathcal{C}})).
	\]
	
	On the other hand, let
	\[
	D_{\mathcal{B}}=\begin{pmatrix}
		\lambda_{1} & 0 & \cdots & 0& 0&\cdots\\
		0 & \lambda_{2} & \cdots & 0 & 0&\cdots\\
		\vdots & \vdots & \ddots & \vdots & \vdots\\
		0 & 0 & \cdots & \lambda_{n} &0&\cdots\\
		0 & 0 & \cdots & 0 &0 & \cdots\\
		\vdots & \vdots &\cdots&\vdots&\vdots&\ddots
	\end{pmatrix}
	\ \ \ \text{and} \ \ \
	D_{\mathcal{C}}=\begin{pmatrix}
		\mu_{1} & 0 & \cdots & 0& 0 &\cdots\\
		0 & \mu_{2} & \cdots & 0 & 0 &\cdots\\
		\vdots & \vdots & \ddots & \vdots & \vdots\\
		0 & 0 & \cdots & \mu_{m} & 0 &\cdots\\
		0 & 0 & \cdots & 0 &0 & \cdots\\
		\vdots & \vdots &\cdots&\vdots&\vdots&\ddots
	\end{pmatrix}.
	\]
	Then, 
	\[
	\widetilde{G}_{\mathcal{B}}\sim D_{\mathcal{B}} \ \ \ \text{and} \ \ \ \widetilde{G}_{\mathcal{C}}\sim D_{\mathcal{C}}.
	\]
	Therefore,
	\begin{align*}
		d_{F}(\mathscr{S}(\widetilde{G}_{\mathcal{B}}),\mathscr{S}(\widetilde{G}_{\mathcal{C}}))&\leq  \Vert D_{\mathcal{B}}-D_{\mathcal{C}}\Vert_{F}\\
		&=\sqrt{ \sum\limits_{i=1}^{N} \vert\lambda_{i}-\mu_{i}\vert^{2}}\\
		&=\sqrt{ \sum\limits_{i=1}^{\infty} \vert\lambda_{i}-\mu_{i}\vert^{2}} \\
		&=d_{S}^{(2)}(\mathcal{B}, \mathcal{C}).
	\end{align*}
	The proof is completed.
\end{proof}

For more information about similarity orbits, readers can refer to \cite{Herrero-1982, Apo-Fia-Her-Voi-1984}.

In the application, the pseudometric $d_{S}^{(1)}$ is often useful too. Similar to the Proposition \ref{l2case}, if we arrange every sequences of eigenvalues in descending order, the distance $d_{S}^{(1)}(\mathcal{B}, \mathcal{C})$ is also achieved in the matching $id:\mathbb{N}\rightarrow\mathbb{N}$.

\begin{proposition}\label{l1case}
	Let $\mathcal{B}$ and $\mathcal{C}$ be two barcodes of finite bounded type, and let $\Lambda_{\mathcal{B}} = \{\lambda_{i}\}_{i=1}^{\infty}$ and $\Lambda_{\mathcal{C}} = \{\mu_{j}\}_{j=1}^{\infty}$ be two sequences of eigenvalues  satisfying $\lambda_{1}\geq \lambda_{2}\geq \ldots$ and $\mu_{1}\geq \mu_{2}\geq \ldots$. Then,
	\[
	d_{S}^{(1)}(\mathcal{B}, \mathcal{C})=\inf\limits_{\sigma} ~\sum\limits_{i=1}^{\infty} \vert\lambda_{i}-\mu_{\sigma(i)}\vert =  \sum\limits_{i=1}^{\infty} \vert\lambda_{i}-\mu_{i}\vert.
	\]
\end{proposition}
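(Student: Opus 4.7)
The plan is to mirror the structure of the proof of Proposition \ref{l2case} and reduce the problem to a finite one. Since $\mathcal{B}$ and $\mathcal{C}$ are barcodes of finite bounded type, there exists $N\in\mathbb{N}$ such that $\lambda_i=\mu_i=0$ for all $i>N$. Any matching $\sigma$ that pairs the infinite tails of zeros with each other contributes nothing, so the infimum reduces to a minimum over permutations $\pi$ on $\{1,2,\ldots,N\}$:
\[
d_{S}^{(1)}(\mathcal{B},\mathcal{C})=\min_{\pi}\sum_{i=1}^{N}\lvert\lambda_i-\mu_{\pi(i)}\rvert.
\]
It then suffices to show this minimum equals $\sum_{i=1}^{N}\lvert\lambda_i-\mu_i\rvert$.

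The cleanest route is the classical \emph{swap lemma} in $\ell^{1}$: for any four real numbers with $a\le b$ and $c\le d$,
\[
\lvert a-c\rvert+\lvert b-d\rvert\;\le\;\lvert a-d\rvert+\lvert b-c\rvert.
\]
I would verify this by distinguishing the relative orderings of $\{a,b,c,d\}$ (six cases, which collapse under symmetry to essentially three), in each case removing the absolute values and reducing to an elementary inequality. Applying this to any pair of indices $i<j$ for which a permutation $\pi$ satisfies $\pi(i)>\pi(j)$, and setting $a=\lambda_j$, $b=\lambda_i$, $c=\mu_{\pi(j)}$, $d=\mu_{\pi(i)}$ (the hypotheses $a\le b$ and $c\le d$ follow from the descending order of the eigenvalue sequences), the swap lemma shows that transposing the values $\pi(i)$ and $\pi(j)$ does not increase the total cost.

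A finite bubble-sort argument then finishes the proof: starting from any permutation $\pi$, repeatedly swap out-of-order pairs until no inversions remain, i.e., until $\pi$ becomes the identity. Each swap weakly decreases the cost and strictly reduces the number of inversions, so the process terminates after at most $\binom{N}{2}$ swaps. Alternatively, one could imitate the author's induction on $N$ as in Proposition \ref{l2case}, splitting on the value of $s(1)$, but the swap-and-sort argument is more transparent for the $\ell^{1}$ case.

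The main technical step is the swap lemma itself; once established, the rest is bookkeeping. The obstacle compared with $p=2$ is that the absolute-value terms prevent a direct algebraic expansion as in the author's proof of Proposition \ref{l2case}, so the case analysis on the order type of $\{a,b,c,d\}$ is unavoidable. No further functional-analytic input (beyond monotonicity of the sorted sequences) is needed.
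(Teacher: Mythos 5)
Your proposal is correct in substance but takes a genuinely different route from the paper. The paper proves the identity by induction on $N$: for an arbitrary permutation $s$ it splits on the value of $s(1)$ (either $s(1)=n$, handled by a telescoping estimate using monotonicity, or $s(1)=p<n$, handled by the inductive hypothesis applied to shifted index blocks), exactly parallel to its proof of Proposition \ref{l2case}. You instead isolate the classical $\ell^{1}$ swap lemma, $\lvert a-c\rvert+\lvert b-d\rvert\le\lvert a-d\rvert+\lvert b-c\rvert$ for $a\le b$, $c\le d$, and remove inversions one at a time; this is arguably cleaner, makes the rearrangement structure explicit, and avoids the somewhat delicate re-indexing in the paper's $s(1)=p<n$ case, at the cost of the finite case analysis needed to verify the lemma. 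One bookkeeping slip: with your assignment $a=\lambda_{j}$, $b=\lambda_{i}$, $c=\mu_{\pi(j)}$, $d=\mu_{\pi(i)}$ and $\pi(i)>\pi(j)$, the descending order gives $d\le c$ rather than $c\le d$, and the lemma as written would then bound the pre-swap cost \emph{below} the post-swap cost; you should interchange the roles of $c$ and $d$ (set $c=\mu_{\pi(i)}$, $d=\mu_{\pi(j)}$), after which the post-swap (aligned) cost is the left-hand side and the inequality runs in the direction you need. With that relabeling, and noting that the reduction from matchings of infinite sequences to permutations of $\{1,\dots,N\}$ is justified exactly as in the paper (only finitely many nonzero eigenvalues), the argument is complete.
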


\begin{proof}
	Since $\mathcal{B}$ and $\mathcal{C}$ are barcodes of finite bounded type, $\Lambda_{\mathcal{B}} = \{\lambda_{i}\}_{i=1}^{\infty}$ and $\Lambda_{\mathcal{C}} = \{\mu_{j}\}_{j=1}^{\infty}$ are two sequences of non-negative numbers with finite positive elements. Consequently,  there exists  a positive integer $N$ such that
	\[
	\inf\limits_{\sigma} ~\sum\limits_{i=1}^{\infty} \vert\lambda_{i}-\mu_{\sigma(i)}\vert = \min\limits_{\pi} ~\sum\limits_{i=1}^{N} \vert\lambda_{i}-\mu_{\pi(i)}\vert,
	\]
	where $\sigma$ is taken all over the matchings between the two eigenvalue sequences $\Lambda_{\mathcal{B}}$ and $\Lambda_{\mathcal{C}}$, and $\pi$ is taken all over the permutations on $\{1,2, \cdots, N\}$. Then, it suffices to prove
	\begin{equation}\label{l1f}
		\min\limits_{\pi}~\sum\limits_{i=1}^{N} \vert\lambda_{i}-\mu_{\pi(i)}\vert= \sum\limits_{i=1}^{N} \vert\lambda_{i}-\mu_{i}\vert.
	\end{equation}
	Let us prove the equation (\ref{l1f}) by the inductive method of $N$. If
	$N=1$, it is obvious that the equation (\ref{l1f}) holds.
	Now, assume that the equation (\ref{l1f}) holds for all $N=1,2, \cdots,n-1$, i.e., for any $k=1,2, \cdots,n-1$,
	\begin{equation}\label{assume1}
		\min\limits_{\pi_k}~\sum\limits_{i=1}^{k} \vert\lambda_{i}-\mu_{\pi_k(i)}\vert=\sum\limits_{i=1}^{k} \vert\lambda_{i}-\mu_{i}\vert,
	\end{equation}
	where $\pi_k$ is taken all over the permutations on $\{1,2, \cdots, k\}$.
	
	Consider $N = n$. Given any permutation $s$ on $\{1,2, \cdots, N\}$. We will show that
	\[
	\sum\limits_{i=1}^{n} \vert\lambda_{i}-\mu_{s(i)}\vert \ge \sum\limits_{i=1}^{n} \vert\lambda_{i}-\mu_{i}\vert.
	\]
	
	Without loss of generality, suppose that $\lambda_n\ge \mu_n$.
	
	If $s(1)=n$, then it follows the assumption (\ref{assume1}) and the monotone decreasing of $\lambda_{i}$ and $\mu_{i}$ that
	\begin{align*}
		&\sum\limits_{i=1}^{n} \vert\lambda_{i}-\mu_{s(i)}\vert- \sum\limits_{i=1}^{n} \vert\lambda_{i}-\mu_{i}\vert\\
		=&\vert\lambda_{1}-\mu_{n}\vert+\sum\limits_{i=2}^{n} \vert\lambda_{i}-\mu_{s(i)}\vert- \sum\limits_{i=1}^{n} \vert\lambda_{i}-\mu_{i}\vert \\
		\ge & \left(\vert\lambda_{1}-\mu_{n}\vert+\sum\limits_{i=1}^{n-1} \vert\lambda_{i+1}-\mu_{i}\vert\right)-
		\left(\vert\lambda_{n}-\mu_{n}\vert+\sum\limits_{i=1}^{n-1} \vert\lambda_{i}-\mu_{i}\vert\right)\\
		=& \left(\vert\lambda_{1}-\mu_{n}\vert-\vert\lambda_{n}-\mu_{n}\vert\right)+\sum\limits_{i=1}^{n-1} \left(\vert\lambda_{i+1}-\mu_{i}\vert-\vert\lambda_{i}-\mu_{i}\vert\right)\\
		\ge& \vert\lambda_{1}-\lambda_{n}\vert-\sum\limits_{i=1}^{n-1}\vert\lambda_{i}-\lambda_{i+1}\vert\\
		=& (\lambda_{1}-\lambda_{n})-\sum\limits_{i=1}^{n-1}(\lambda_{i}-\lambda_{i+1})\\
		=& 0.
	\end{align*}
	If $s(1)=p<n$, then it follows the assumption (\ref{assume1}) that
	\begin{align*}
		&\sum\limits_{i=1}^{n} \vert\lambda_{i}-\mu_{s(i)}\vert \\
		=& \vert\lambda_{1}-\mu_{p}\vert+\sum\limits_{i=2}^{n} \vert\lambda_{i}-\mu_{s(i)}\vert \\
		\ge & \vert\lambda_{1}-\mu_{p}\vert+\sum\limits_{i=1}^{p-1} \vert\lambda_{i+1}-\mu_{i}\vert+\sum\limits_{i=p+1}^{n} \vert\lambda_{i}-\mu_{i}\vert \\
		\ge& \sum\limits_{i=1}^{p} \vert\lambda_{i}-\mu_{i}\vert+\sum\limits_{i=p+1}^{n} \vert\lambda_{i}-\mu_{i}\vert \\
		=&\sum\limits_{i=1}^{n} \vert\lambda_{i}-\mu_{i}\vert.
	\end{align*}
	
	This finishes the proof.
\end{proof}

At the end of this section, we give two examples to show that the pseudometric $d_{S}^{(2)}$ metric the topological spaces in the sense of "similarity". The first example shows that the pseudometric $d_{S}^{(2)}$ is concerned with the relative position relationship of the bars but not the congruent shape of the barcodes.

\begin{example}
	Let $\mathcal{B}=\{I_1, I_2\}$ and $\mathcal{C}=\{J_1, J_2\}$ be two barcodes, where $I_1=(1,3]$, $I_2=(2,4]$, $J_1=(6,8]$ and $J_2=(7,9]$. Then 
	\[
	G_{\mathcal{B}}=G_{\mathcal{C}}=\begin{pmatrix}
		4  & 1 \\
		1   & 4
	\end{pmatrix},
	\]
	and consequently
	\[
	\widetilde{G}_{\mathcal{B}}=\widetilde{G}_{\mathcal{C}}=\begin{pmatrix}
		\frac{4}{5} & \frac{1}{5} &0 &\cdots \\
		\frac{1}{5}   & \frac{4}{5} &0 &\cdots \\
		0  &0 &0 &\vdots \\
		\vdots &\vdots  &\vdots &\ddots
	\end{pmatrix}\sim 
	\begin{pmatrix}
		1 & 0 &0 &\cdots \\
		0   & \frac{3}{5} &0 &\cdots \\
		0  &0 &0 &\vdots \\
		\vdots &\vdots  &\vdots &\ddots
	\end{pmatrix}.
	\]
\end{example}

The second example shows that the pseudometric $d_{S}^{(2)}$ eliminates the influence of periodicity and measures the shape in a single period.

\begin{example}
	Let $\mathcal{B}=\{I_1, I_2\}$ and $\mathcal{C}=\{J_1, J_2, J_3, J_4\}$ be two barcodes, where $I_1=(1,3]$, $I_2=(2,4]$, $J_1=J_2=(1,3]$, $J_3=J_4=(2,4]$. Then
	\[
	G_{\mathcal{B}}=\begin{pmatrix}
		4  & 1 \\
		1   & 4
	\end{pmatrix} \ \ \ \text{and} \ \ \ \ 
	G_{\mathcal{C}}=\begin{pmatrix}
		4 &4  &1 & 1 \\
		4 &4  &1 & 1 \\
		1 &1  &4  & 4 \\
		1 &1  &4  & 4 
	\end{pmatrix},
	\]
	and consequently
	\[
	\widetilde{G}_{\mathcal{B}}=\begin{pmatrix}
		\frac{4}{5} & \frac{1}{5} &0 &\cdots \\
		\frac{1}{5}   & \frac{4}{5} &0 &\cdots \\
		0  &0 &0 &\vdots \\
		\vdots &\vdots  &\vdots &\ddots
	\end{pmatrix}
	\sim
	\begin{pmatrix}
		1 & 0 &0 &\cdots \\
		0   & \frac{3}{5} &0 &\cdots \\
		0  &0 &0 &\vdots \\
		\vdots &\vdots  &\vdots &\ddots
	\end{pmatrix} \sim 
	\begin{pmatrix}
		\frac{4}{5} &\frac{4}{5} & \frac{1}{5} &\frac{1}{5} &0 &\cdots  \\
		\frac{4}{5} &\frac{4}{5} &\frac{1}{5}   & \frac{1}{5} &0 &\cdots \\
		\frac{1}{5}   &\frac{1}{5}  & \frac{4}{5} & \frac{4}{5} &0 &\cdots \\	 
		\frac{1}{5}   &\frac{1}{5}  & \frac{4}{5} & \frac{4}{5} &0 &\cdots \\
		0  &0 &0 &0 &0 &\vdots \\
		\vdots &\vdots &\vdots &\vdots &\vdots &\ddots
	\end{pmatrix}=\widetilde{G}_{\mathcal{C}}.
	\]
	
\end{example}

\section{Computational examples}


As well known, conformal linear transformations are one representative type of similarity transformations, we provide specific examples to demonstrate that our pseudometrics $d_{S}^{(2)}$ and $d_{S}^{(1)}$ are stable under conformal linear transformations.  

Firstly, we apply $d_{S}^{(1)} $ and  $d_{S}^{(2)}$ to the barcodes generated by Vietoris-Rips filtration on synthetic dataset, and compare the performances of our distance $d_{S}$, bottleneck distance $d_{b}$ and Wasserstein distance $d_{\mathcal{W}}$.

Secondly, we apply $d_{S}^{(1)} $ and  $d_{S}^{(2)}$ to the barcodes generated by sublevel set filtration on the waves of a tuning fork and a piano, and compare the performances of our distance $d_{S}^{(1)}$,  $d_{S}^{(2)} $, bottleneck distance $d_{b}$ and Wasserstein distance $d_{\mathcal{W}}$.

Our code for the experiments is publicly available at \url{https://github.com/HeJiaxing-hjx/Stable_Similarity_Comparison_of_Persistent_Homology_Groups}.

\subsection{Comparison of $d_{b}$, $d_{\mathcal{W}}^{(1)}$, $d_{\mathcal{W}}^{(2)}$, $d_{S}^{(1)}$ and $d_{S}^{(2)}$ on Vietoris-Rips filtration using Clustering Classification}

In this subsection, we construct synthetic data set consisting of point clouds which are randomly sampled from two ellipses and three ellipses with and without conformal linear transformations. And we compare the performances of  our pseudometric $d_{S}^{(2)}$ $(d_{S}^{(1)})$, the bottleneck distance $d_{b}$ and Wasserstein distance $d_{\mathcal{W}}$ for a binary classification of the synthetic data set by using hierarchical clustering methods. 

\subsubsection{Construction of synthetic dataset}

Our synthetic data set consists of two classes: two ellipses and three ellipses. Each of our samples contains 240 points. We produce 150 point clouds of 240 points randomly sampled from each class. For the class with three ellipses, we have one ellipse centered at $(0,80)$ with a major axis of 35 and a minor axis of 25, from which we randomly sampled 40 points. Additionally, there are two ellipses centered at $(0,0)$, one equips with a major axis of 50 and a minor axis of 30, the other equips with a major axis of 40 and a minor axis of 20, and then we randomly sample 100 points from each ellipse. For the class with two ellipses, we have two ellipses centered at $(0,0)$, one with a major axis of 50 and a minor axis of 30, and the other with a major axis of 40 and a minor axis of 20, from which we randomly sampled 120 points each. Then, we add a level of Gaussian noise $(\eta = 0.5)$. We apply a conformal linear transformation to each sample, specifically, the rotation angles around the origin are uniformly sampled from the interval $[0,2\pi)$, the scaling factors are uniformed sampled from the interval $[0.1,10)$, and the two coordinates of translation are uniformly sampled from the interval $[0,1)$. This gives 300 point clouds without transformation and 300 point clouds with transformations in total. The Figure \ref{fig:vr_without_transform} below shows examples of samples from the two classes without transformation, and the Figure \ref{fig:vr2} shows examples of samples from the two classes with random conformal linear transformations.

\begin{figure}[htbp]
	\begin{minipage}[t]{0.45\linewidth}
		\centering
		\includegraphics[height=5.5cm,width=5.5cm]{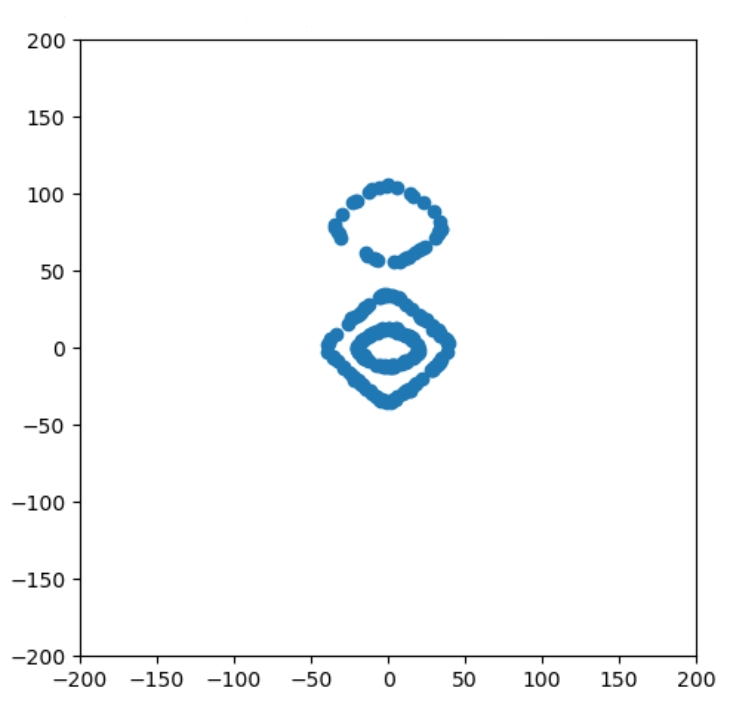}
	\end{minipage}%
	\begin{minipage}[t]{0.45\linewidth}
		\centering
		\includegraphics[height=5.5cm,width=5.5cm]{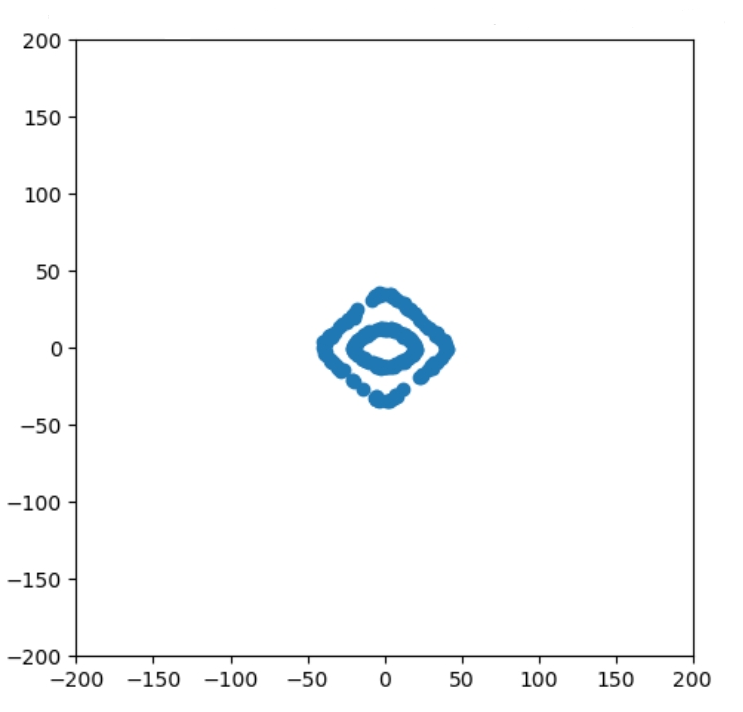}
	\end{minipage}
	\caption{Examples of two classes without transformation. The left one is an example of random sampling on three ellipses, the right one is an example of random sampling on two ellipses.}
	\label{fig:vr_without_transform}
\end{figure}


\begin{figure}[htbp]
	\begin{minipage}[t]{0.45\linewidth}
		\centering
		\includegraphics[height=5.5cm,width=5.5cm]{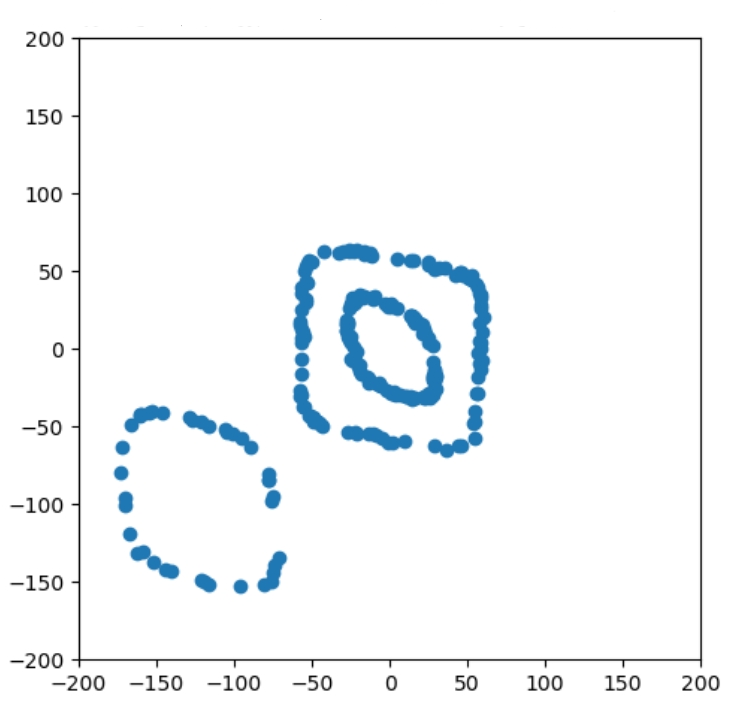}
	\end{minipage}%
	\begin{minipage}[t]{0.45\linewidth}
		\centering
		\includegraphics[height=5.5cm,width=5.5cm]{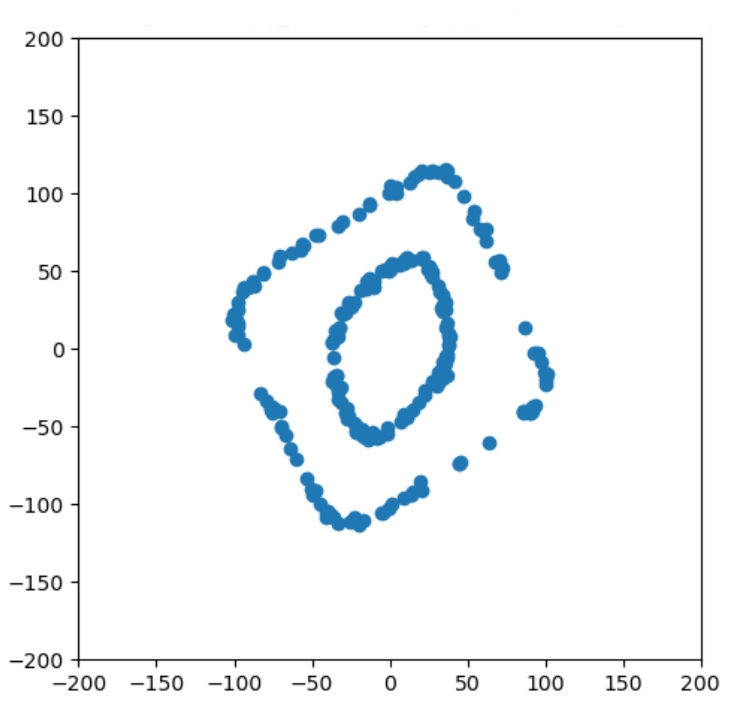}
	\end{minipage}
	\caption{Examples of two classes with conformal linear transformations. The left one is the left example of Figure \ref{fig:vr_without_transform} with random conformal linear transformation, the right one is the right example of Figure \ref{fig:vr_without_transform} with random conformal linear transformation.}
	\label{fig:vr2}
\end{figure}

\subsubsection{Construction of comparable experiments}

We build Vietoris-Rips filtration from each point cloud which have been endowed with the ambient Euclidean metric on $\mathbb{R}^{2}$, and use the one dimensional homology functor $H_{1}$ to produce persistence barcodes which detect loops in our filtered topological space. We  use $d_{b}$, $d_{\mathcal{W}}^{(1)}$ and  $d_{\mathcal{W}}^{(2)}$ to compare with our pseudometrics $d_{S}^{(1)}$ and $d_{S}^{(2)}$ with one homological dimension ($H_{1}$).

There are three clustering methods which require only distance matrices for clustering including $K$-medoids Clustering \cite{Rdu-Kau-1987}, Agglomerative Clustering \cite{Ward-1963} and Affinity Propagation Clustering \cite{affinity-2007}. $K$-medoids produces a patition of a metric space into $K$ clusters by choosing $K$ points from the data set called medoids and assigning each metric space point to its closest medoids.  The Agglomerative Clustering  performs a hierarchical clustering using a bottom up approach (each observation starts in its own cluster, and clusters are successively merged together). Affinity Propagationis a clustering algorithm which can find "exemplars", members of the input set that are representative of clusters. If one pseudometric is fixed, we obtain the corresponding distance matrix in which each element is the pairwise distance between two samples. We compare these distance matrices based on how well they classify the random point clouds into two classes via the three clustering methods.

In our experiments, we set $K=2$ in $K$-medoids Clustering.  In Agglomerative Clustering, we set the number of clusters equal to 2 and use average-linkage. The linkage criterion determines how the distance between sets of observations is calculated. Specifically, with average-linkage, the distance between two clusters is defined as the average of all pairwise distances between points in the two clusters. In our experiments, we set the affinity of Affinity Propagation to be precomputed.  

\subsubsection{Evaluation metric}

We evaluate the clustering results using the Fowlkes-Mallows Index ($FMI$) \cite{FMI-1983} and the numbers of the clusters. The Fowlkes-Mallows Index  is defined by the geometric mean between the precisions,
$$FMI = \frac{TP}{\sqrt{((TP + FP) * (TP + FN))}},   $$
where $TP$ is the number of true positives (i.e. the number of pair of points that belongs to the same clusters in both true labels and predicted labels), $FP$ is the number of false positives (i.e. the number of pair of points that belongs in the same clusters in true labels and not in predicted labels) and $FN$ is the number of false negatives (i.e. the number of pair of points that belongs in the same clusters in predicted labels and not in true labels). The score ranges from 0 to 1. A high value indicates a good similarity between two clusters.

\subsubsection{Results}
In each experiment, we perform 100 trials, average the classification scores ($FMI$) and average the number of clusters. In Table \ref{table1}, we use $FMI$ to report the classification accuracy of the three clustering methods with and without conformal linear transformations. We can see that the $FMI$ scores for our pseudometrics $d_{S}^{(1)}$ and $d_{S}^{(2)}$ are almost identical with and without conformal transformations, which means our pseudometrics remain stable under conformal linear transformations. However, the scores for $d_{b}$, $d_{\mathcal{W}}^{(1)}$ and $d_{\mathcal{W}}^{(2)}$ significantly decrease when applying conformal linear transformations to the dataset. Therefore, on dataset subjected to conformal linear transformations, our method clearly outperforms the other three pseudometrics. Additionally, although $d_{b}$, $d_{\mathcal{W}}^{(1)}$ and $d_{\mathcal{W}}^{(2)}$ perform better than ours in agglomerative  clustering without transformations, their scores are still very low in Affinity Propagation Clustering. Thus, in the Affinity Propagation Clustering method, our pseudometrics clearly outperform the other three pseudometrics.

\begin{table*}[h]\small
	\centering
	
	\begin{tabular}{|c|c|c|c|c|c|c|}
		
		\hline
		\multicolumn{1}{|c|}{\multirow{2}{*}{Pseudometric}}
		&\multicolumn{3}{|c|}{Without transformation }&\multicolumn{3}{|c|}{With transformation}
		\\
		
		\cline{2-7} 
		
		\multicolumn{1}{|c|}{} &{KMC}& { AC} & {APC}&{KMC}&{AC } & {APC}
		\\
		
		\hline
		
		\multicolumn{1}{|c|}{$d_{b}$} &{0.943}& {0.946} & {0.526} &{0.528} & {0.652} & {0.629}\\
		\hline
		
		\multicolumn{1}{|c|}{$d_{\mathcal{W}}^{(1)}$} &\textbf{0.958} & {0.948} & {0.577} &{0.518} & {0.626} & {0.520}\\
		\hline
		
		\multicolumn{1}{|c|}{$d_{\mathcal{W}}^{(2)}$} &{0.946} & \textbf{0.949} & {0.669} &{0.521} & {0.652} & {0.596}\\
		\hline
		
		\multicolumn{1}{|c|}{$d_{S}^{(1)}$} &{0.859} & {0.836} & {0.729} &\textbf{0.852} & \textbf{0.844} & {0.721}\\
		\hline
		
		\multicolumn{1}{|c|}{$d_{S}^{(2)}$} &{0.843} & {0.812} & \textbf{0.738} &\textbf{0.852} & {0.822} & \textbf{0.743}\\
		\hline

	\end{tabular}
	\vspace{2mm}
	\caption{Average $FMI$ scores of $K$-medoids Clustering (KMC), Agglomerative Clustering (AC) and Affinity Propagation Clustering (APC) for calculating pseudometrics between each pair of samples on synthetic dataset. Bold indicates highest scores. }
	\label{table1}
	
\end{table*}

In Table \ref{table2}, we record the number of clusters obtained from Affinity Propagation Clustering. We can see that only our metrics yield  numbers of clusters equal to 2, which indicate that our pseudometrics are superior to the other three pseudometrics $d_{b}$, $d_{\mathcal{W}}^{(1)}$ and $d_{\mathcal{W}}^{(2)}$. 

\begin{table*}[h]\small
	\centering
	
	\begin{tabular}{|c|c|c|}
		
		\hline
		\multicolumn{1}{|c|}{\multirow{2}{*}{Pseudometric}}
		&\multicolumn{1}{|c|}{Without transformation }&\multicolumn{1}{|c|}{With transformation}
		\\
		
		\multicolumn{1}{|c|}{}
		&\multicolumn{1}{|c|}{(APC)}&\multicolumn{1}{|c|}{(APC)}
		\\
		\hline

		\multicolumn{1}{|c|}{$d_{b}$} & {64.66}  & {3.39} \\
		\hline
		
		\multicolumn{1}{|c|}{$d_{\mathcal{W}}^{(1)}$} & {6.27} & {4.07} \\
		\hline
		
		\multicolumn{1}{|c|}{$d_{\mathcal{W}}^{(2)}$} & {4.07} & {3.48} \\
		\hline
		
		\multicolumn{1}{|c|}{$d_{S}^{(1)}$} & {2.07} & {2.07} \\
		\hline
		
		\multicolumn{1}{|c|}{$d_{S}^{(2)}$} & \textbf{2.00} & \textbf{2.00}\\
		\hline

	\end{tabular}
	\vspace{2mm}
	\caption{Number of clusters generated by Affinity Propagation Clustering (APC) algorithms on synthetic dataset. Bold represents the correct number of classes.}
	\label{table2}
	
\end{table*}

In Table 3, we record the time required for calculating the distance between each pair of samples. All timings are computed on a laptop with an AMD Ryzen 7 5800H with Radeon Graphics and 16GB of memory. We compute $d_{b}$ distance matrices using Python packages from
\url{https://persim.scikit-tda.org/}. And we  compute $d_{\mathcal{W}}^{(1)}$ and $d_{\mathcal{W}}^{(2)}$ distance matrices using the software named HERA \cite{Kerber-2016} . The average running time for the distance matrices under  $d_{S}^{(1)}$ and $d_{S}^{(2)}$ is significantly less than that for the distance matrices under $d_{b}$ and is comparable to the average running time for the distance matrices under $d_{\mathcal{W}}^{(1)}$ and $d_{\mathcal{W}}^{(2)}$ with HERA acceleration.

\begin{table*}[h]\small
	\centering
	
	\begin{tabular}{|c|c|c|}
		
		\hline
		\multicolumn{1}{|c|}{\multirow{2}{*}{Pseudometric}}
		&\multicolumn{1}{|c|}{Without transformation }&\multicolumn{1}{|c|}{With transformation}
		\\
		
		\multicolumn{1}{|c|}{}
		&\multicolumn{1}{|c|}{(Time)}&\multicolumn{1}{|c|}{(Time)}
		\\
		\hline

		\multicolumn{1}{|c|}{$d_{b}$} & {566s}  & {515s} \\
		\hline
		
		\multicolumn{1}{|c|}{$d_{\mathcal{W}}^{(1)}$} & {49s} & {34s} \\
		\hline
		
		\multicolumn{1}{|c|}{$d_{\mathcal{W}}^{(2)}$} & {93s} & {56s} \\
		\hline
		
		\multicolumn{1}{|c|}{$d_{S}^{(1)}$} & {63s} & {55s} \\
		\hline
		
		\multicolumn{1}{|c|}{$d_{S}^{(2)}$} & {64s} & {56s}\\
		\hline

	\end{tabular}
	\vspace{2mm}
	\caption{Average running time for calculating pseudometrics between each pair of samples on synthetic dataset.}
	\label{table3}
	
\end{table*}

\subsection{Comparison of $d_{b}$, $d_{\mathcal{W}}^{(1)}$, $d_{\mathcal{W}}^{(2)}$, $d_{S}^{(1)}$ and $d_{S}^{(2)}$ on sublevel set filtration using Clustering Classification}

To verify that our metrics can distinguish different waveforms of sound waves,  we conduct experiments on the wave of a 440 Hz tuning fork with resonance box and the wave of a single note from a piano obtained from the Freesound online sound archive \cite{tuning, piano}. The experimental results demonstrate that our pseudometric is only related to the essential waveform and is independent on frequency and amplitude.


\subsubsection{Construction of dataset}


We slice the waves of the tuning fork and piano to get 200 segments  with durations in $[0.02, 0.05)$ of waves in total, which are obtained by randomly selecting 100 segments of the wave from the tuning fork dataset in the time interval $[3, 15)$ and selecting 100 segments of the wave from the piano dataset in the time interval $[0.2, 1)$.  There are two reasons for selecting sound waves with a duration between [0.02, 0.05). First, sound waves are periodic, and a slice longer than 0.02 seconds must contain a complete cycle. Second, we want to verify that our pseudometric is independent on the frequency or the number of periods, so we choose different length of the slices. We take different beginning time of the slices whose amplitudes are different. The Figure \ref{Tuning} below shows two examples of the waves of the tuning fork and the Figure \ref{Piano} shows two   examples of the waves of the piano.

\begin{figure}[htbp]
	\centering
	\begin{minipage}{0.95\linewidth}
		\centering
		\includegraphics[width=0.9\linewidth]{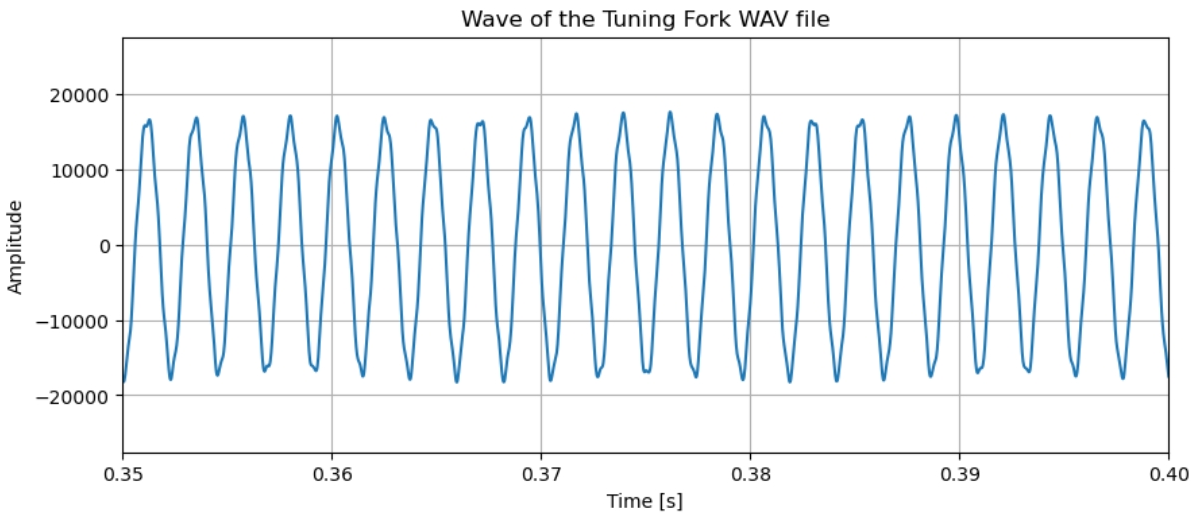}
	\end{minipage}
	\qquad
	
	\begin{minipage}{0.95\linewidth}
		\centering
		\includegraphics[width=0.9\linewidth]{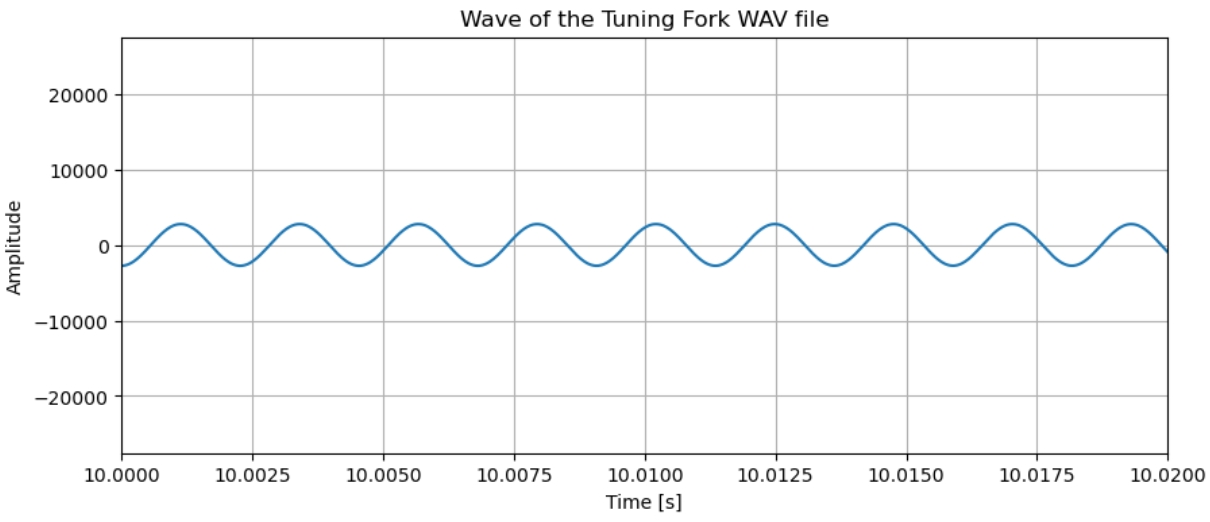}
	\end{minipage}
	\caption{Waves produced by a tuning fork with different amplitudes.}
	\label{Tuning}
\end{figure}

\begin{figure}[htbp]
	\centering
	\begin{minipage}{0.95\linewidth}
		\centering
		\includegraphics[width=0.9\linewidth]{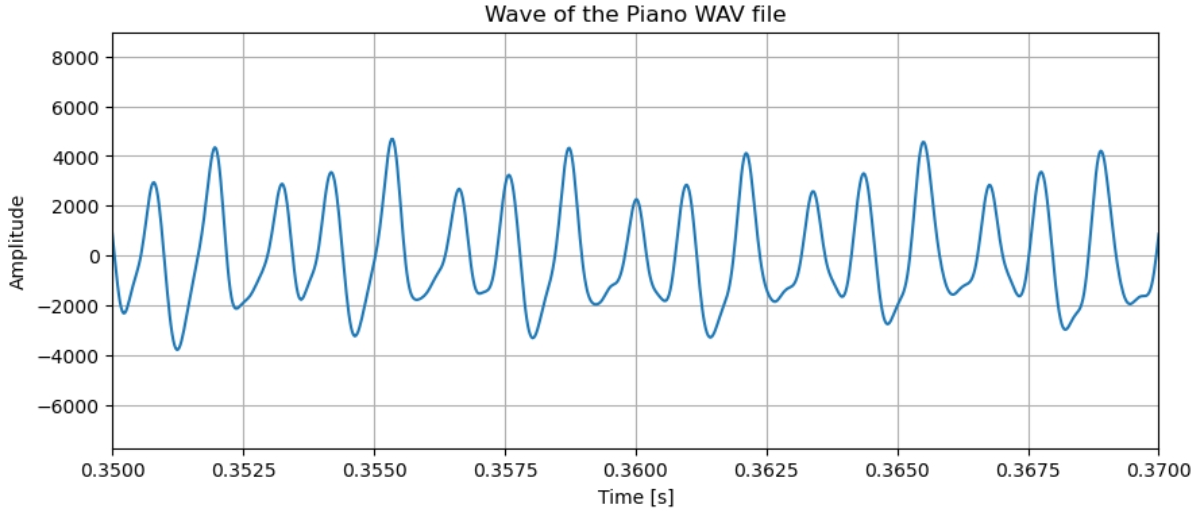}
	\end{minipage}
	\qquad
	
	\begin{minipage}{0.95\linewidth}
		\centering
		\includegraphics[width=0.9\linewidth]{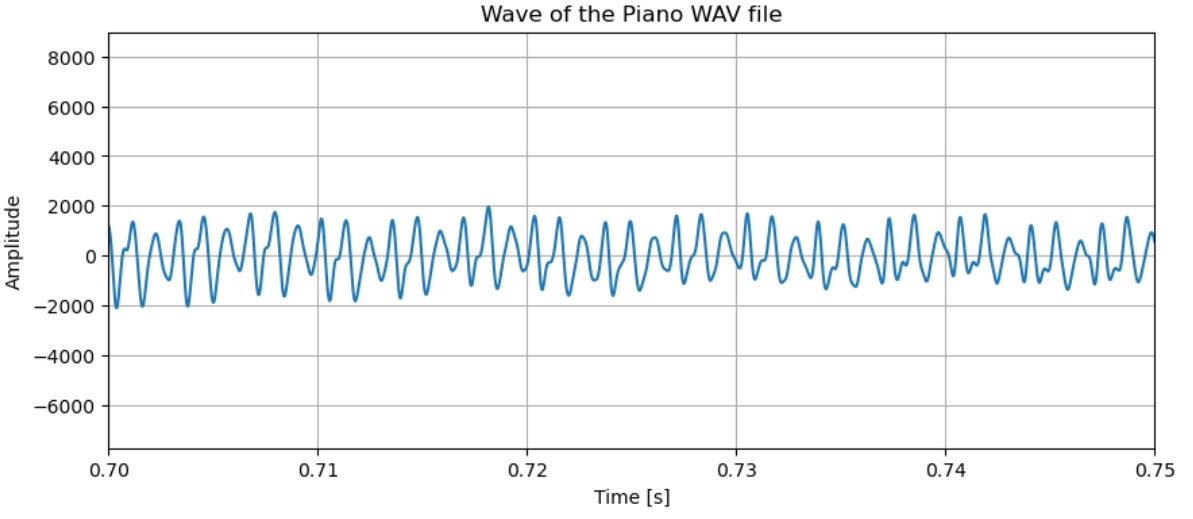}
	\end{minipage}
	\caption{Waves produced by a piano with different amplitudes.}
	\label{Piano}
\end{figure}

\subsubsection{Construction of comparable experiments}

We build sublevel set filtration by Ripser software \cite{Ripser} from each wave and apply the zero dimensional homology functor $H_{0}$ to produce barcodes which detect connect components.

In the present experiment, we use the same clustering methods and evaluation metric as in the synthetic dataset experiment.

\subsection{Results}

In each experiment, we perform 100 trials, average the classification scores ($FMI$) and average the number of clusters. In Table \ref{table4}, we report the classification accuracies of the three clustering methods for the waves of the tuning fork and piano. Our pseudometric $d_{S}^{(1)}$ achieve an $FMI$ score of 1 in Agglomerative Clustering and $K$-medoids Clustering, while $d_{S}^{(2)}$ can achieve an $FMI$ score of 0.997 in $K$-medoids Clustering. Our pseudometrics $d_{S}^{(1)}$ and $d_{S}^{(2)}$ also achieve higher $FMI$ scores in Affinity Propagation than the other three pseudometrics $d_{b}$, $d_{\mathcal{W}}^{(1)}$ and $d_{\mathcal{W}}^{(2)}$. Our pseudometrics $d_{S}^{(1)}$ and $d_{S}^{(2)}$ outperform the other three pseudometrics $d_{b}$, $d_{\mathcal{W}}^{(1)}$ and $d_{\mathcal{W}}^{(2)}$, since it can distinguish the sound waves produced by different instruments, even the amplitudes and frequencies of the sound waves change over time. The experimental results demonstrate that our pseudometric is only related to the essential waveform and is independent on frequency and amplitude.

\begin{table*}[h]\small
	\centering
	
	\begin{tabular}{|c|c|c|c|}
		
		\hline
		\multicolumn{1}{|c|}{Pseudometric}&{KMC}& { AC} & {APC}\\
		

		\hline
		
		\multicolumn{1}{|c|}{$d_{b}$} &{0.539}& {0.699} & {0.603} \\
		\hline
		
		\multicolumn{1}{|c|}{$d_{\mathcal{W}}^{(1)}$} &{0.539} & {0.666} & {0.643}\\
		\hline
		
		\multicolumn{1}{|c|}{$d_{\mathcal{W}}^{(2)}$}&{0.534} & {0.669} & {0.650}\\
		\hline
		
		\multicolumn{1}{|c|}{$d_{S}^{(1)}$} &\textbf{1.000} & \textbf{1.000} & \textbf{0.874}\\
		\hline
		
		\multicolumn{1}{|c|}{$d_{S}^{(2)}$} &{0.997} & {0.889} & {0.669}\\
		\hline

	\end{tabular}
	\vspace{2mm}
	\caption{Average $FMI$ scores  of $K$-medoids Clustering (KMC), Agglomerative Clustering (AC) and Affinity Propagation Clustering (APC) for calculating  pseudometrics between each pair of samples from the waves generated by a tuning fork and a piano. Bold indicates highest scores. }
	\label{table4}
	
\end{table*}
In Table \ref{table5}, we record the number of clusters obtained from Affinity Propagation Clustering. We can see that only our pseudometrics yield  numbers of clusters equal to 2, which indicate our pseudometrics are superior to the other three pseudometrics $d_{b}$, $d_{\mathcal{W}}^{(1)}$ and $d_{\mathcal{W}}^{(2)}$. 

\begin{table*}[h]\small
	\centering
	
	\begin{tabular}{|c|c|}
		
		\hline
		\multicolumn{1}{|c|}{Pseudometric} &{APC}
		\\

		\hline

		\multicolumn{1}{|c|}{$d_{b}$} & {2.29}  \\
		\hline
		
		\multicolumn{1}{|c|}{$d_{\mathcal{W}}^{(1)}$}  & {2.36}  \\
		\hline
		
		\multicolumn{1}{|c|}{$d_{\mathcal{W}}^{(2)}$}  & {2.09}  \\
		\hline
		
		\multicolumn{1}{|c|}{$d_{S}^{(1)}$}  & \textbf{2.00}  \\
		\hline
		
		\multicolumn{1}{|c|}{$d_{S}^{(2)}$}  & \textbf{2.00} \\
		\hline

	\end{tabular}
	\vspace{2mm}
	\caption{Number of clusters generated by Affinity Propagation Clustering (APC) algorithms on samples from the waves generated by a tuning fork and a piano. Bold represents the correct number of classes.}
	\label{table5}
	
\end{table*}

In Table \ref{table6}, we record the time required for calculating the distance between each pair of samples. The average running time for the distance matrices under $d_{S}^{(1)}$ and $d_{S}^{(2)}$ is significantly less than that for the distance matrices under the $d_{b}$ distance.

\begin{table*}[h]\small
	\centering
	
	\begin{tabular}{|c|c|}
		
		\hline
		\multicolumn{1}{|c|}{Pseudometric} &{Time}\\

		\hline

		\multicolumn{1}{|c|}{$d_{b}$} & {891s}   \\
		\hline
		
		\multicolumn{1}{|c|}{$d_{\mathcal{W}}^{(1)}$}& {36s} \\
		\hline
		
		\multicolumn{1}{|c|}{$d_{\mathcal{W}}^{(2)}$} & {67s} \\
		\hline
		
		\multicolumn{1}{|c|}{$d_{S}^{(1)}$} & {82s} \\
		\hline
		
		\multicolumn{1}{|c|}{$d_{S}^{(2)}$} & {86s} \\
		\hline

	\end{tabular}
	\vspace{2mm}
	\caption{Average running time for calculating different pseudometrics between each pair of samples on from the waves generated by a tuning fork and a piano.}
	\label{table6}
	
\end{table*}

\section{Conclusion and future work}

In this paper, we define a pseudometric $d_{S}^{(p)}$ and provide $d_{S}^{(2)}$ is a similarity invariant. The pseudometric $d_{S}^{(2)}$  $(d_{S}^{(1)})$ not only can be used to classify the objects in the sense of similarity, but also can reduce the computation time  for pairwise distances. 

In the future work, we would like to further develop our pseudometric on both theoretical and practical perspectives. We will further study the pseudometric $d_{S}^{(p)}$ in Matrix Theory and  Operator Theory. As well as the method to classify the timbres in this paper previously, we are confident that our pseudometric $d_{S}^{(p)}$ has the potential to deal with issues related to waveforms, such as time series analysis, signal processing and so on. Furthermore, it can be extended to suit more tasks in graphics, imaging processing and artificial intelligence.

\section*{Declarations}

\noindent \textbf{Ethics approval}

\noindent Not applicable.

\noindent \textbf{Competing interests}

\noindent The author declares that there is no conflict of interest or competing interest.

\noindent \textbf{Authors' contributions}

\noindent All authors contributed equally to this work.

\noindent \textbf{Funding}

\noindent There is no funding source for this manuscript.

\bibliography{reference}


\begin{thebibliography}{42}
\ifx \bisbn   \undefined \def \bisbn  #1{ISBN #1}\fi
\ifx \binits  \undefined \def \binits#1{#1}\fi
\ifx \bauthor  \undefined \def \bauthor#1{#1}\fi
\ifx \batitle  \undefined \def \batitle#1{#1}\fi
\ifx \bjtitle  \undefined \def \bjtitle#1{#1}\fi
\ifx \bvolume  \undefined \def \bvolume#1{\textbf{#1}}\fi
\ifx \byear  \undefined \def \byear#1{#1}\fi
\ifx \bissue  \undefined \def \bissue#1{#1}\fi
\ifx \bfpage  \undefined \def \bfpage#1{#1}\fi
\ifx \blpage  \undefined \def \blpage #1{#1}\fi
\ifx \burl  \undefined \def \burl#1{\textsf{#1}}\fi
\ifx \doiurl  \undefined \def \doiurl#1{\url{https://doi.org/#1}}\fi
\ifx \betal  \undefined \def \betal{\textit{et al.}}\fi
\ifx \binstitute  \undefined \def \binstitute#1{#1}\fi
\ifx \binstitutionaled  \undefined \def \binstitutionaled#1{#1}\fi
\ifx \bctitle  \undefined \def \bctitle#1{#1}\fi
\ifx \beditor  \undefined \def \beditor#1{#1}\fi
\ifx \bpublisher  \undefined \def \bpublisher#1{#1}\fi
\ifx \bbtitle  \undefined \def \bbtitle#1{#1}\fi
\ifx \bedition  \undefined \def \bedition#1{#1}\fi
\ifx \bseriesno  \undefined \def \bseriesno#1{#1}\fi
\ifx \blocation  \undefined \def \blocation#1{#1}\fi
\ifx \bsertitle  \undefined \def \bsertitle#1{#1}\fi
\ifx \bsnm \undefined \def \bsnm#1{#1}\fi
\ifx \bsuffix \undefined \def \bsuffix#1{#1}\fi
\ifx \bparticle \undefined \def \bparticle#1{#1}\fi
\ifx \barticle \undefined \def \barticle#1{#1}\fi
\bibcommenthead
\ifx \bconfdate \undefined \def \bconfdate #1{#1}\fi
\ifx \botherref \undefined \def \botherref #1{#1}\fi
\ifx \url \undefined \def \url#1{\textsf{#1}}\fi
\ifx \bchapter \undefined \def \bchapter#1{#1}\fi
\ifx \bbook \undefined \def \bbook#1{#1}\fi
\ifx \bcomment \undefined \def \bcomment#1{#1}\fi
\ifx \oauthor \undefined \def \oauthor#1{#1}\fi
\ifx \citeauthoryear \undefined \def \citeauthoryear#1{#1}\fi
\ifx \endbibitem  \undefined \def \endbibitem {}\fi
\ifx \bconflocation  \undefined \def \bconflocation#1{#1}\fi
\ifx \arxivurl  \undefined \def \arxivurl#1{\textsf{#1}}\fi
\csname PreBibitemsHook\endcsname

\bibitem[\protect\citeauthoryear{Carlsson}{2009}]{Car-book-2009}
\begin{barticle}
\bauthor{\bsnm{Carlsson}, \binits{G.}}:
\batitle{Topology and data}.
\bjtitle{Bull. Amer. Math. Soc. (N.S.)}
\bvolume{46}(\bissue{2}),
\bfpage{255}--\blpage{308}
(\byear{2009})
\doiurl{10.1090/S0273-0979-09-01249-X}
\end{barticle}
\endbibitem

\bibitem[\protect\citeauthoryear{Zomorodian}{2005}]{Zo-book-2005}
\begin{bbook}
\bauthor{\bsnm{Zomorodian}, \binits{A.J.}}:
\bbtitle{Topology for Computing}.
\bsertitle{Cambridge Monographs on Applied and Computational Mathematics},
vol. \bseriesno{16},
p. \bfpage{243}.
\bpublisher{Cambridge University Press},
\blocation{Cambridge}
(\byear{2005}).
\doiurl{10.1017/CBO9780511546945}
\end{bbook}
\endbibitem

\bibitem[\protect\citeauthoryear{Oudot}{2015}]{Qudot-2017}
\begin{bbook}
\bauthor{\bsnm{Oudot}, \binits{S.Y.}}:
\bbtitle{Persistence Theory: from Quiver Representations to Data Analysis}.
\bsertitle{Mathematical Surveys and Monographs},
vol. \bseriesno{209},
p. \bfpage{218}.
\bpublisher{American Mathematical Society, Providence, RI},
\blocation{Providence, RI}
(\byear{2015}).
\doiurl{10.1090/surv/209}
\end{bbook}
\endbibitem

\bibitem[\protect\citeauthoryear{Polterovich et~al.}{2020}]{PRSZ-2020}
\begin{bbook}
\bauthor{\bsnm{Polterovich}, \binits{L.}},
\bauthor{\bsnm{Rosen}, \binits{D.}},
\bauthor{\bsnm{Samvelyan}, \binits{K.}},
\bauthor{\bsnm{Zhang}, \binits{J.}}:
\bbtitle{Topological Persistence in Geometry and Analysis}.
\bsertitle{University Lecture Series},
vol. \bseriesno{74},
p. \bfpage{128}.
\bpublisher{American Mathematical Society},
\blocation{Providence, RI}
(\byear{2020})
\end{bbook}
\endbibitem

\bibitem[\protect\citeauthoryear{Teddy\_Frost}{2016}]{piano}
\begin{botherref}
\oauthor{\bsnm{Teddy\_Frost}}:
piano normal d4.wav.
\url{https://freesound.org/people/Teddy_Frost/sounds/334536/}
(2016)
\end{botherref}
\endbibitem

\bibitem[\protect\citeauthoryear{jmuehlhans}{2014}]{tuning}
\begin{botherref}
\oauthor{\bsnm{jmuehlhans}}:
Tuning Fork 440 Hz, Resonance Box.
\url{https://freesound.org/people/jmuehlhans/sounds/220747/}
(2014)
\end{botherref}
\endbibitem

\bibitem[\protect\citeauthoryear{Edelsbrunner et~al.}{2002}]{Edel-Lets-Zo-2000}
\begin{barticle}
\bauthor{\bsnm{Edelsbrunner}, \binits{H.}},
\bauthor{\bsnm{Letscher}, \binits{D.}},
\bauthor{\bsnm{Zomorodian}, \binits{A.}}:
\batitle{Topological persistence and simplification}.
\bjtitle{Discrete Comput. Geom.}
\bvolume{28}(\bissue{4}),
\bfpage{511}--\blpage{533}
(\byear{2002})
\doiurl{10.1007/s00454-002-2885-2}
\end{barticle}
\endbibitem

\bibitem[\protect\citeauthoryear{Zomorodian and Carlsson}{2005}]{Zo-Gu-2005}
\begin{barticle}
\bauthor{\bsnm{Zomorodian}, \binits{A.}},
\bauthor{\bsnm{Carlsson}, \binits{G.}}:
\batitle{Computing persistent homology}.
\bjtitle{Discrete Comput. Geom.}
\bvolume{33}(\bissue{2}),
\bfpage{249}--\blpage{274}
(\byear{2005})
\doiurl{10.1007/s00454-004-1146-y}
\end{barticle}
\endbibitem

\bibitem[\protect\citeauthoryear{Carlsson and Zomorodian}{2009}]{Car-2009}
\begin{barticle}
\bauthor{\bsnm{Carlsson}, \binits{G.}},
\bauthor{\bsnm{Zomorodian}, \binits{A.}}:
\batitle{The theory of multidimensional persistence}.
\bjtitle{Discrete Comput. Geom.}
\bvolume{42}(\bissue{1}),
\bfpage{71}--\blpage{93}
(\byear{2009})
\doiurl{10.1007/s00454-009-9176-0}
\end{barticle}
\endbibitem

\bibitem[\protect\citeauthoryear{Cohen-Steiner et~al.}{2006}]{David-2006}
\begin{bchapter}
\bauthor{\bsnm{Cohen-Steiner}, \binits{D.}},
\bauthor{\bsnm{Edelsbrunner}, \binits{H.}},
\bauthor{\bsnm{Morozov}, \binits{D.}}:
\bctitle{Vines and vineyards by updating persistence in linear time}.
In: \bbtitle{Computational Geometry ({SCG}'06)},
pp. \bfpage{119}--\blpage{126}.
\bpublisher{ACM},
\blocation{New York}
(\byear{2006}).
\doiurl{10.1145/1137856.1137877}
\end{bchapter}
\endbibitem

\bibitem[\protect\citeauthoryear{{The RIVET Developers}}{2020}]{RIVET-2020}
\begin{botherref}
\oauthor{\bsnm{{The RIVET Developers}}}:
RIVET.
\url{https://github.com/rivetTDA/rivet/}
(2020)
\end{botherref}
\endbibitem

\bibitem[\protect\citeauthoryear{Morozov}{}]{Dion-2012}
\begin{botherref}
\oauthor{\bsnm{Morozov}, \binits{D.}}:
Dionysus 2.
\url{https://mrzv.org/software/dionysus2/}
\end{botherref}
\endbibitem

\bibitem[\protect\citeauthoryear{Collins et~al.}{2004}]{Co-Zo-Car-2004}
\begin{barticle}
\bauthor{\bsnm{Collins}, \binits{A.}},
\bauthor{\bsnm{Zomorodian}, \binits{A.}},
\bauthor{\bsnm{Carlsson}, \binits{G.}},
\bauthor{\bsnm{Guibas}, \binits{L.J.}}:
\batitle{A barcode shape descriptor for curve point cloud data}.
\bjtitle{Computers \& Graphics}
\bvolume{28}(\bissue{6}),
\bfpage{881}--\blpage{894}
(\byear{2004})
\end{barticle}
\endbibitem

\bibitem[\protect\citeauthoryear{Br{\"u}el-Gabrielsson
  et~al.}{2020}]{Gab-Nel-Dwa-2020}
\begin{bchapter}
\bauthor{\bsnm{Br{\"u}el-Gabrielsson}, \binits{R.}},
\bauthor{\bsnm{Nelson}, \binits{B.J.}},
\bauthor{\bsnm{Dwaraknath}, \binits{A.}},
\bauthor{\bsnm{Skraba}, \binits{P.}},
\bauthor{\bsnm{Guibas}, \binits{L.J.}},
\bauthor{\bsnm{Carlsson}, \binits{G.}}:
\bctitle{A topology layer for machine learning}.
In: \bbtitle{International Conference on Artificial Intelligence and
  Statistics},
pp. \bfpage{1553}--\blpage{1563}
(\byear{2020}).
\bcomment{PMLR}
\end{bchapter}
\endbibitem

\bibitem[\protect\citeauthoryear{de~Silva et~al.}{2005}]{De-Gh-Mu-2005}
\begin{bchapter}
\bauthor{\bsnm{Silva}, \binits{V.}},
\bauthor{\bsnm{Ghrist}, \binits{R.}},
\bauthor{\bsnm{Muhammad}, \binits{A.}}:
\bctitle{Blind {Swarms for Coverage in 2-D.}}
In: \bbtitle{Robotics: Science and Systems},
pp. \bfpage{335}--\blpage{342}
(\byear{2005})
\end{bchapter}
\endbibitem

\bibitem[\protect\citeauthoryear{Edelsbrunner and
  Morozov}{2017}]{Edel-Moro-2017}
\begin{bchapter}
\bauthor{\bsnm{Edelsbrunner}, \binits{H.}},
\bauthor{\bsnm{Morozov}, \binits{D.}}:
\bctitle{Persistent homology}.
In: \bbtitle{Handbook of Discrete and Computational Geometry},
pp. \bfpage{637}--\blpage{661}.
\bpublisher{Chapman and Hall/CRC},
\blocation{Boca Raton, FL}
(\byear{2017})
\end{bchapter}
\endbibitem

\bibitem[\protect\citeauthoryear{Carlsson}{2020}]{Car-2020}
\begin{bchapter}
\bauthor{\bsnm{Carlsson}, \binits{G.}}:
\bctitle{Persistent homology and applied homotopy theory}.
In: \bbtitle{Handbook of Homotopy Theory}.
\bsertitle{CRC Press/Chapman Hall Handb. Math. Ser.},
pp. \bfpage{297}--\blpage{330}.
\bpublisher{CRC Press},
\blocation{Boca Raton, FL}
(\byear{2020})
\end{bchapter}
\endbibitem

\bibitem[\protect\citeauthoryear{Pun et~al.}{2022}]{Pun-Lee-Xia-2022}
\begin{barticle}
\bauthor{\bsnm{Pun}, \binits{C.S.}},
\bauthor{\bsnm{Lee}, \binits{S.X.}},
\bauthor{\bsnm{Xia}, \binits{K.}}:
\batitle{Persistent-homology-based machine learning: a survey and a comparative
  study}.
\bjtitle{Artificial Intelligence Review}
\bvolume{55}(\bissue{7}),
\bfpage{5169}--\blpage{5213}
(\byear{2022})
\end{barticle}
\endbibitem

\bibitem[\protect\citeauthoryear{Adams et~al.}{2017}]{Adams-Emer-Kir-2017}
\begin{barticle}
\bauthor{\bsnm{Adams}, \binits{H.}},
\bauthor{\bsnm{Emerson}, \binits{T.}},
\bauthor{\bsnm{Kirby}, \binits{M.}},
\bauthor{\bsnm{Neville}, \binits{R.}},
\bauthor{\bsnm{Peterson}, \binits{C.}},
\bauthor{\bsnm{Shipman}, \binits{P.}},
\bauthor{\bsnm{Chepushtanova}, \binits{S.}},
\bauthor{\bsnm{Hanson}, \binits{E.}},
\bauthor{\bsnm{Motta}, \binits{F.}},
\bauthor{\bsnm{Ziegelmeier}, \binits{L.}}:
\batitle{Persistence images: A stable vector representation of persistent
  homology}.
\bjtitle{Journal of Machine Learning Research}
\bvolume{18}(\bissue{8}),
\bfpage{1}--\blpage{35}
(\byear{2017})
\end{barticle}
\endbibitem

\bibitem[\protect\citeauthoryear{Meng et~al.}{2020}]{Meng-Anand-Lu-2020}
\begin{barticle}
\bauthor{\bsnm{Meng}, \binits{Z.}},
\bauthor{\bsnm{Anand}, \binits{D.V.}},
\bauthor{\bsnm{Lu}, \binits{Y.}},
\bauthor{\bsnm{Wu}, \binits{J.}},
\bauthor{\bsnm{Xia}, \binits{K.}}:
\batitle{Weighted persistent homology for biomolecular data analysis}.
\bjtitle{Scientific reports}
\bvolume{10}(\bissue{1}),
\bfpage{2079}
(\byear{2020})
\end{barticle}
\endbibitem

\bibitem[\protect\citeauthoryear{Kovacev-Nikolic
  et~al.}{2016}]{Kov-Bub-Nik-2016}
\begin{barticle}
\bauthor{\bsnm{Kovacev-Nikolic}, \binits{V.}},
\bauthor{\bsnm{Bubenik}, \binits{P.}},
\bauthor{\bsnm{Nikoli\'c}, \binits{D.}},
\bauthor{\bsnm{Heo}, \binits{G.}}:
\batitle{Using persistent homology and dynamical distances to analyze protein
  binding}.
\bjtitle{Stat. Appl. Genet. Mol. Biol.}
\bvolume{15}(\bissue{1}),
\bfpage{19}--\blpage{38}
(\byear{2016})
\doiurl{10.1515/sagmb-2015-0057}
\end{barticle}
\endbibitem

\bibitem[\protect\citeauthoryear{Poulenard et~al.}{2018}]{Pou-Skr-Ovs-2018}
\begin{botherref}
\oauthor{\bsnm{Poulenard}, \binits{A.}},
\oauthor{\bsnm{Skraba}, \binits{P.}},
\oauthor{\bsnm{Ovsjanikov}, \binits{M.}}:
Topological function optimization for continuous shape matching
\textbf{37}(5),
13--25
(2018).
Wiley Online Library
\end{botherref}
\endbibitem

\bibitem[\protect\citeauthoryear{Ahmed et~al.}{2023}]{Ahm-Nuw-Tor-2023}
\begin{bchapter}
\bauthor{\bsnm{Ahmed}, \binits{F.}},
\bauthor{\bsnm{Nuwagira}, \binits{B.}},
\bauthor{\bsnm{Torlak}, \binits{F.}},
\bauthor{\bsnm{Coskunuzer}, \binits{B.}}:
\bctitle{{Topo-CXR: Chest X-ray TB and Pneumonia Screening with Topological
  Machine Learning}}.
In: \bbtitle{Proceedings of the IEEE/CVF International Conference on Computer
  Vision},
pp. \bfpage{2326}--\blpage{2336}
(\byear{2023})
\end{bchapter}
\endbibitem

\bibitem[\protect\citeauthoryear{Oyama et~al.}{2019}]{Oya-Hir-Oba-2019}
\begin{barticle}
\bauthor{\bsnm{Oyama}, \binits{A.}},
\bauthor{\bsnm{Hiraoka}, \binits{Y.}},
\bauthor{\bsnm{Obayashi}, \binits{I.}},
\bauthor{\bsnm{Saikawa}, \binits{Y.}},
\bauthor{\bsnm{Furui}, \binits{S.}},
\bauthor{\bsnm{Shiraishi}, \binits{K.}},
\bauthor{\bsnm{Kumagai}, \binits{S.}},
\bauthor{\bsnm{Hayashi}, \binits{T.}},
\bauthor{\bsnm{Kotoku}, \binits{J.}}:
\batitle{{Hepatic tumor classification using texture and topology analysis of
  non-contrast-enhanced three-dimensional T1-weighted MR images with a
  radiomics approach}}.
\bjtitle{Scientific reports}
\bvolume{9}(\bissue{1}),
\bfpage{8764}
(\byear{2019})
\end{barticle}
\endbibitem

\bibitem[\protect\citeauthoryear{Cohen-Steiner et~al.}{2007}]{Co-Edel-Har-2007}
\begin{barticle}
\bauthor{\bsnm{Cohen-Steiner}, \binits{D.}},
\bauthor{\bsnm{Edelsbrunner}, \binits{H.}},
\bauthor{\bsnm{Harer}, \binits{J.}}:
\batitle{Stability of persistence diagrams}.
\bjtitle{Discrete Comput. Geom.}
\bvolume{37}(\bissue{1}),
\bfpage{103}--\blpage{120}
(\byear{2007})
\doiurl{10.1007/s00454-006-1276-5}
\end{barticle}
\endbibitem

\bibitem[\protect\citeauthoryear{d'Amico et~al.}{2010}]{d-Fro-Lan-2010}
\begin{barticle}
\bauthor{\bsnm{d'Amico}, \binits{M.}},
\bauthor{\bsnm{Frosini}, \binits{P.}},
\bauthor{\bsnm{Landi}, \binits{C.}}:
\batitle{Natural pseudo-distance and optimal matching between reduced size
  functions}.
\bjtitle{Acta Appl. Math.}
\bvolume{109}(\bissue{2}),
\bfpage{527}--\blpage{554}
(\byear{2010})
\doiurl{10.1007/s10440-008-9332-1}
\end{barticle}
\endbibitem

\bibitem[\protect\citeauthoryear{Bauer and Lesnick}{2015}]{Bau-Les-2014}
\begin{barticle}
\bauthor{\bsnm{Bauer}, \binits{U.}},
\bauthor{\bsnm{Lesnick}, \binits{M.}}:
\batitle{Induced matchings and the algebraic stability of persistence
  barcodes}.
\bjtitle{J. Comput. Geom.}
\bvolume{6}(\bissue{2}),
\bfpage{162}--\blpage{191}
(\byear{2015})
\doiurl{10.20382/jocg.v6i2a9}
\end{barticle}
\endbibitem

\bibitem[\protect\citeauthoryear{Lesnick}{2015}]{Les-2015}
\begin{barticle}
\bauthor{\bsnm{Lesnick}, \binits{M.}}:
\batitle{The theory of the interleaving distance on multidimensional
  persistence modules}.
\bjtitle{Found. Comput. Math.}
\bvolume{15}(\bissue{3}),
\bfpage{613}--\blpage{650}
(\byear{2015})
\doiurl{10.1007/s10208-015-9255-y}
\end{barticle}
\endbibitem

\bibitem[\protect\citeauthoryear{Frosini}{2013}]{Fro-2013}
\begin{barticle}
\bauthor{\bsnm{Frosini}, \binits{P.}}:
\batitle{Stable comparison of multidimensional persistent homology groups with
  torsion}.
\bjtitle{Acta Appl. Math.}
\bvolume{124},
\bfpage{43}--\blpage{54}
(\byear{2013})
\doiurl{10.1007/s10440-012-9769-0}
\end{barticle}
\endbibitem

\bibitem[\protect\citeauthoryear{Carlsson and Filippenko}{2020}]{Carl-Fil-2020}
\begin{barticle}
\bauthor{\bsnm{Carlsson}, \binits{G.}},
\bauthor{\bsnm{Filippenko}, \binits{B.}}:
\batitle{Persistent homology of the sum metric}.
\bjtitle{J. Pure Appl. Algebra}
\bvolume{224}(\bissue{5}),
\bfpage{106244}--\blpage{25}
(\byear{2020})
\doiurl{10.1016/j.jpaa.2019.106244}
\end{barticle}
\endbibitem

\bibitem[\protect\citeauthoryear{Frosini et~al.}{2019}]{Fro-Lan-Me-2019}
\begin{barticle}
\bauthor{\bsnm{Frosini}, \binits{P.}},
\bauthor{\bsnm{Landi}, \binits{C.}},
\bauthor{\bsnm{M{\'e}moli}, \binits{F.}}:
\batitle{The persistent homotopy type distance}.
\bjtitle{Homology Homotopy Appl.}
\bvolume{21}(\bissue{2}),
\bfpage{231}--\blpage{259}
(\byear{2019})
\doiurl{10.4310/HHA.2019.v21.n2.a13}
\end{barticle}
\endbibitem

\bibitem[\protect\citeauthoryear{Hatcher}{2002}]{Hatcher}
\begin{bbook}
\bauthor{\bsnm{Hatcher}, \binits{A.}}:
\bbtitle{Algebraic Topology},
p. \bfpage{544}.
\bpublisher{Cambridge University Press},
\blocation{Cambridge}
(\byear{2002})
\end{bbook}
\endbibitem

\bibitem[\protect\citeauthoryear{Robinson and Turner}{2017}]{Ro-Tur-2017}
\begin{barticle}
\bauthor{\bsnm{Robinson}, \binits{A.}},
\bauthor{\bsnm{Turner}, \binits{K.}}:
\batitle{Hypothesis testing for topological data analysis}.
\bjtitle{J. Appl. Comput. Topol.}
\bvolume{1}(\bissue{2}),
\bfpage{241}--\blpage{261}
(\byear{2017})
\doiurl{10.1007/s41468-017-0008-7}
\end{barticle}
\endbibitem

\bibitem[\protect\citeauthoryear{Bjerkevik and Lesnick}{2021}]{Bje-Les-2021}
\begin{botherref}
\oauthor{\bsnm{Bjerkevik}, \binits{H.B.}},
\oauthor{\bsnm{Lesnick}, \binits{M.}}:
$l^p$-distances on multiparameter persistence modules.
arXiv preprint arXiv: 2106.13589
(2021)
\end{botherref}
\endbibitem

\bibitem[\protect\citeauthoryear{Herrero}{1982}]{Herrero-1982}
\begin{bbook}
\bauthor{\bsnm{Herrero}, \binits{D.A.}}:
\bbtitle{Approximation of {H}ilbert Space Operators. {V}ol. {I}}.
\bsertitle{Research Notes in Mathematics},
vol. \bseriesno{72},
p. \bfpage{255}.
\bpublisher{Pitman (Advanced Publishing Program)},
\blocation{Boston, MA}
(\byear{1982})
\end{bbook}
\endbibitem

\bibitem[\protect\citeauthoryear{Apostol et~al.}{1984}]{Apo-Fia-Her-Voi-1984}
\begin{bbook}
\bauthor{\bsnm{Apostol}, \binits{C.}},
\bauthor{\bsnm{Fialkow}, \binits{L.A.}},
\bauthor{\bsnm{Herrero}, \binits{D.A.}},
\bauthor{\bsnm{Voiculescu}, \binits{D.}}:
\bbtitle{Approximation of {H}ilbert Space Operators. {V}ol. {II}}.
\bsertitle{Research Notes in Mathematics},
vol. \bseriesno{102},
p. \bfpage{524}.
\bpublisher{Pitman (Advanced Publishing Program)},
\blocation{Boston, MA}
(\byear{1984})
\end{bbook}
\endbibitem

\bibitem[\protect\citeauthoryear{Rdusseeun and Kaufman}{1987}]{Rdu-Kau-1987}
\begin{bchapter}
\bauthor{\bsnm{Rdusseeun}, \binits{L.K.P.J.}},
\bauthor{\bsnm{Kaufman}, \binits{P.}}:
\bctitle{Clustering by means of medoids}.
In: \bbtitle{Proceedings of the Statistical Data Analysis Based on the L1 Norm
  Conference, Neuchatel, Switzerland},
vol. \bseriesno{31}
(\byear{1987})
\end{bchapter}
\endbibitem

\bibitem[\protect\citeauthoryear{Ward}{1963}]{Ward-1963}
\begin{barticle}
\bauthor{\bsnm{Ward}, \binits{J.H.} \bsuffix{Jr.}}:
\batitle{Hierarchical grouping to optimize an objective function}.
\bjtitle{J. Amer. Statist. Assoc.}
\bvolume{58},
\bfpage{236}--\blpage{244}
(\byear{1963})
\end{barticle}
\endbibitem

\bibitem[\protect\citeauthoryear{Frey and Dueck}{2007}]{affinity-2007}
\begin{barticle}
\bauthor{\bsnm{Frey}, \binits{B.J.}},
\bauthor{\bsnm{Dueck}, \binits{D.}}:
\batitle{Clustering by passing messages between data points}.
\bjtitle{Science}
\bvolume{315}(\bissue{5814}),
\bfpage{972}--\blpage{976}
(\byear{2007})
\doiurl{10.1126/science.1136800}
\end{barticle}
\endbibitem

\bibitem[\protect\citeauthoryear{Fowlkes and Mallows}{1983}]{FMI-1983}
\begin{barticle}
\bauthor{\bsnm{Fowlkes}, \binits{E.B.}},
\bauthor{\bsnm{Mallows}, \binits{C.L.}}:
\batitle{A method for comparing two hierarchical clusterings.}
\bjtitle{Journal of the American Statistical Association}
\bvolume{77}(\bissue{383}),
\bfpage{553}
(\byear{1983})
\end{barticle}
\endbibitem

\bibitem[\protect\citeauthoryear{Kerber et~al.}{2016}]{Kerber-2016}
\begin{bchapter}
\bauthor{\bsnm{Kerber}, \binits{M.}},
\bauthor{\bsnm{Morozov}, \binits{D.}},
\bauthor{\bsnm{Nigmetov}, \binits{A.}}:
\bctitle{Geometry helps to compare persistence diagrams}.
In: \bbtitle{2016 Proceedings of the Eighteenth Workshop on Algorithm
  Engineering and Experiments (ALENEX)},
pp. \bfpage{103}--\blpage{112}
(\byear{2016}).
\bcomment{SIAM}
\end{bchapter}
\endbibitem

\bibitem[\protect\citeauthoryear{Tralie et~al.}{2018}]{Ripser}
\begin{barticle}
\bauthor{\bsnm{Tralie}, \binits{C.}},
\bauthor{\bsnm{Saul}, \binits{N.}},
\bauthor{\bsnm{Bar-On}, \binits{R.}}:
\batitle{{Ripser.py}: A lean persistent homology library for python}.
\bjtitle{The Journal of Open Source Software}
\bvolume{3}(\bissue{29}),
\bfpage{925}
(\byear{2018})
\doiurl{10.21105/joss.00925}
\end{barticle}
\endbibitem

\end{thebibliography}
\end{document}